\pgfplotsset{compat=1.7}
\newcommand{\R}{\mathbb{R}}
\newcommand{\Q}{\mathbb{Q}}
\newcommand{\Z}{\mathbb{Z}}
\renewcommand{\l}{\ell}
\renewcommand{\sl}{\mathfrak{sl}}
\newcommand{\Cs}{\mathscr{C}}
\renewcommand{\d}{\partial}
\newcommand{\Zq}{\Z[q,q^{-1}]}
\newcommand{\BN}{\mathscr{BN}}
\newcommand{\BBN}{\mathscr{BBN}}
\newcommand{\A}{\mathbb{A}}
\newcommand{\F}{\mathcal{F}}
\newcommand{\til}[1]{\widetilde{#1}}
\renewcommand{\b}[1]{\overline{#1}}
\newcommand{\mmod}{{\rm -}\operatorname{mod}}
\newcommand{\gmod}{{\rm -}\operatorname{gmod}}
\newcommand{\ggmod}{{\rm -}\operatorname{ggmod}}
\renewcommand{\k}{\Bbbk}
\newcommand{\qdeg}{\operatorname{qdeg}}
\newcommand{\adeg}{\operatorname{adeg}}
\newcommand{\lr}[1]{\vert {#1} \vert}
\newcommand{\rar}[1]{\xrightarrow{#1}}
\renewcommand{\v}{v}
\newcommand{\lrar}{\leftrightarrow}
\renewcommand{\u}[1]{\underline{#1}}
\newcommand{\al}{\alpha}
\newcommand{\D}{\mathcal{D}}
\newcommand{\ald}{{\alpha\mathcal{D}}}
\renewcommand{\o}{\otimes}
\newcommand{\eps}{\varepsilon}
\newcommand{\Fn}{\mathcal{F}}
\newcommand{\Fa}{\mathcal{F}_{\alpha}}
\newcommand{\Fad}{\mathcal{F}_{\alpha \mathcal{D}}}
\newcommand{\Gn}{\mathcal{G}}
\newcommand{\Ga}{\mathcal{G}_{\alpha}}
\newcommand{\Gad}{\mathcal{G}_{\alpha \mathcal{D}}}
\newcommand{\ab}{\mathbf{a}}
\newcommand{\bb}{\mathbf{b}}
\newcommand{\s}{\mathfrak{s}}
\newcommand{\be}{\beta}
\renewcommand{\:}{\colon}
\newcommand{\TL}{\mathscr{TL}}
\newcommand\Label[1]{&\refstepcounter{equation}(\theequation)\ltx@label{#1}&}
\newtheorem{theorem}{Theorem}[section]
\newtheorem{lemma}[theorem]{Lemma}
\newtheorem{proposition}[theorem]{Proposition}
\newtheorem{corollary}[theorem]{Corollary}
\theoremstyle{definition}
\newenvironment{question}[1]
  {\innercustomthm}
  {\endinnercustomthm}
\theoremstyle{definition}
\theoremstyle{remark}
\theoremstyle{remark}
\newtheorem{remark}[theorem]{Remark}
\theoremstyle{definition}
\newtheorem{definition}{Definition}[section]
\theoremstyle{remark}
\newtheorem{rmk/}{Remark}
\theoremstyle{definition}
\newtheorem{case/}{Case}
\begin{document}

\title{Equivariant annular Khovanov homology}

\author[R. Akhmechet]{Rostislav Akhmechet}
\address{Department of Mathematics, University of Virginia, Charlottesville VA 22904-4137}
\email{\href{mailto:ra5aq@virginia.edu}{ra5aq@virginia.edu}}

\maketitle

\begin{abstract}
We construct an equivariant version of annular Khovanov homology via the Frobenius algebra associated with $U(1) \times U(1)$-equivariant cohomology of $\mathbb{CP}^1$. Motivated by the relationship between the Temperley-Lieb algebra and annular Khovanov homology, we also introduce an equivariant analogue of the Temperley-Lieb algebra. 
\end{abstract}

\tableofcontents

\section{Introduction}

In \cite{Kh1} Khovanov introduced a categorification of the Jones polynomial by assigning a chain complex $CKh(D)$ of graded abelian groups to a diagram $D$ of an oriented link $L\subset \R^3$. Reidemeister moves between link diagrams induce chain homotopy equivalences between the chain complexes, and the graded Euler characteristic of $CKh(D)$ is the Jones polynomial of $L$. The chain complex is built by forming the so-called \emph{cube of resolutions} and applying the two-dimensional TQFT corresponding to the Frobenius algebra 
\[
H^*(S^2;\Z) = \Z[X]/(X^2).
\]
A crossingless diagram $D$ is assigned a chain complex supported in homological degree zero by applying the TQFT directly to $D$. In particular, the empty link is assigned $H^*(\{*\};\Z)=\Z$, while the unknot is assigned $\Z[X]/(X^2)$. 

Varying the TQFT has been explored extensively, \cite{BN2, Kh3, Le, KR}, and has proven to be fruitful for topological applications, \cite{Ra}. Of particular interest is the \emph{equivariant} or \emph{universal} theory, built using the Frobenius algebra 
\[
A = \Z[E_1, E_2, X]/(X^2 - E_1 X + E_2)
\]
with ground ring $R = \Z[E_1, E_2]$. This is the Frobenius algebra associated with $U(2)$-equivariant cohomology of $\mathbb{CP}^1$ \cite{Kh3}. It specializes to the original theory by setting $E_1 = E_2 = 0$ and to the Lee deformation \cite{Le} by setting $E_1= 0, E_2=-1$. An equivariant version of $\sl_3$-homology was constructed in \cite{MV}, and a generalization to $\sl_n$-homology can be found in \cite{Kr}.

In another direction, Asaeda-Przytycki-Sikora \cite{APS} defined homology for links in $I$-bundles over surfaces. The present paper concerns links in the solid torus, identified with $\A \times [0,1]$ where $\A = S^1 \times [0,1]$ is the annulus. The APS construction in this case is known as \emph{annular Khovanov homology} or \emph{annular APS  homology}. It is a triply graded theory; in addition to homological and quantum gradings, there is a third grading arising from the presence of non-contractible circles in $\A$. The APS annular chain complex may be obtained by applying to the cube of resolutions the \emph{annular TQFT}  
\[
\Gn \colon \BN(\A) \to \Z\ggmod,
\]
where $\BN(\A)$ is the Bar-Natan category of the annulus, and $\k \ggmod$ denotes the category of bigraded modules over a ring $\k$.

This paper extends annular Khovanov homology to the equivariant setting. We work with the Frobenius algebra 
\[
A_\al = \Z[\al_0, \al_1, X]/((X-\al_0)(X-\al_1))
\]
with ground ring $R_\al = \Z[\al_0, \al_1]$, which are the $U(1)\times U(1)$-equivariant cohomology of $\mathbb{CP}^1$ and of a point, respectively \cite{KR}. The Frobenius pair $(R_\al, A_\al)$ is an extension of $(R,A)$ by identifying $E_1, E_2$ with elementary symmetric polynomials in $\al_0, \al_1$,
\[
E_1\mapsto \al_0 + \al_1, E_2\mapsto \al_0\al_1,
\]
so that the polynomial $X^2 - E_1X+E_2 \in R[X]$ splits as $(X-\al_0)(X-\al_1)$ in $R_\al[X]$. We observe in Section \ref{sec:preliminary observation} that working over $(R,A)$ cannot produce an equivariant version of annular APS homology. There is a natural $U(1)\times U(1)$-equivariant analogue $\BN_\al(\A)$ of $\BN(\A)$ where the local relations are dictated by the structure of $A_\al$. 

Our main construction is a TQFT $\Ga$, which, when applied to the cube of resolutions of an annular link diagram, gives a $U(1) \times U(1)$-equivariant version of annular homology.
\begin{theorem}
There exists a functor $\Ga \colon \BN_\al(\A) \to R_\al\ggmod$ such that the following diagram commutes 
\begin{equation*}
\begin{tikzcd}
\BN_\al(\A) \ar[r, "\Ga"] \ar[d] & R_\al\ggmod \ar[d]\\
\BN(\A) \ar[r, "\Gn"] & \Z\ggmod
\end{tikzcd}
\end{equation*}
where the vertical arrows are obtained by setting $\al_0 = \al_1 = 0$. 
\end{theorem}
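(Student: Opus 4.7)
The plan is to construct $\Ga$ by specifying it on crossingless diagrams and on a set of generating cobordisms, then verifying that it descends to the quotient by the local relations of $\BN_\al(\A)$, and finally checking that the specialization $\al_0 = \al_1 = 0$ recovers $\Gn$. On objects, a crossingless annular diagram $D$ decomposes uniquely into $t(D)$ trivial circles and $e(D)$ essential circles, and I would assign $\Ga(D) = A_\al^{\otimes t(D)} \otimes W_\al^{\otimes e(D)}$, where $W_\al$ is a free bigraded $R_\al$-module of rank two whose quantum and annular bigradings are the $R_\al$-linear lifts of those used by $\Gn$ on essential circles.

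On morphisms, I would work with the standard generators of $\BN_\al(\A)$: births, deaths, saddles, and dots. Those supported inside a disk (involving only trivial circles) are dictated by the Frobenius algebra structure of $A_\al$: the unit and counit for birth and death, (co)multiplication for the disk saddles, and multiplication by $X$ for the dot. The novel ingredients are the cobordisms that see the nontrivial topology of $\A$---namely saddles that merge two essential circles into a trivial one (and the reverse), saddles attaching a trivial to an essential circle, and the dot on an essential circle. For each of these I would begin with the non-equivariant map used by $\Gn$, take its $R_\al$-linear lift that is homogeneous in both gradings, and then introduce $\al_0,\al_1$-correction terms chosen so that $W_\al$ becomes an $A_\al$-module compatible with the trivial-essential saddles and the through-the-hole saddles intertwine the algebra structure in the expected way.

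The main step is to verify the local relations of $\BN_\al(\A)$: the sphere evaluations, the dot reduction $X^2 = (\al_0+\al_1)X - \al_0\al_1$, and neck-cutting. Because these relations are supported in a disk, their verification on cobordisms sitting away from essential circles reduces to the standard fact that $A_\al$ is a Frobenius algebra with the prescribed structure. The delicate cases are neck-cutting along a tube near an essential circle and the interactions between dots and through-the-hole saddles; these precisely test the $\al_0,\al_1$-corrections chosen above, and I expect this to be the main technical obstacle---the combinatorics of how many essential circles participate in each configuration is exactly what blocks an extension from $(R,A)$, as already noted in Section~\ref{sec:preliminary observation}, so the split form of $A_\al$ should play an essential role here. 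Once the corrections are pinned down, commutativity of the diagram is immediate from the construction, since each defining map specializes to the corresponding map of $\Gn$ upon setting $\al_0 = \al_1 = 0$.
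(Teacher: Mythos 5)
Your outline defers exactly the step that constitutes the proof. You write that the $\al_0,\al_1$-correction terms are ``chosen so that'' the relations hold, that the delicate cases ``precisely test the corrections chosen above,'' and that once they are ``pinned down'' everything follows --- but you never exhibit these maps or verify a single relation involving an essential circle. Worse, the corrections cannot be chosen locally: in the construction that actually works, the module assigned to the $i$-th essential circle carries the homogeneous basis $\{1, X-\al_0\}$ for $i$ odd and $\{1, X-\al_1\}$ for $i$ even, so the dot on an essential circle acts by $v_0\mapsto \al_0 v_0,\ v_1\mapsto\al_1 v_1$ or by $v_0'\mapsto\al_1 v_0',\ v_1'\mapsto\al_0 v_1'$ depending on the circle's position in the nesting order. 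Consequently $\Ga$ is not monoidal, and a definition by local generators and local relations --- which is what your plan proposes --- has no access to this global position data. There is a second gap: checking the sphere, two-dots, and neck-cutting relations does not establish that your assignment is independent of the decomposition of a cobordism in $\A\times I$ into elementary pieces, i.e.\ invariance under isotopy; that requires a separate argument which your plan does not supply.

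The paper's route avoids both problems. It embeds $\A\hookrightarrow\R^2$ and starts from the already-functorial planar TQFT $\Fa$, equips $\Fa(\Cs)$ with the annular grading via the alternating bases above, and proves (Lemma \ref{lem:el cobs split}, by direct computation on the four saddles of Figure \ref{fig:el saddles}) that every elementary cobordism map splits as an $\adeg$-preserving part plus a part raising $\adeg$ by $2$. Corollary \ref{cor:cobs split} then shows the $\adeg$-preserving part is multiplicative under composition, and one simply defines $\Ga(S):=\Fa(S)_0$. Well-definedness, the local relations, and functoriality are all inherited from $\Fa$ for free, and commutativity of the square is read off by deleting the $\adeg$-raising terms and setting $\al_0=\al_1=0$. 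If you want to salvage your approach, the essential missing ingredient is this filtration argument (or an equivalent mechanism for importing well-definedness from the planar theory), together with the explicit alternating choice of bases that makes the saddle maps filtered.
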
\noindent

We define $\Ga$ by choosing a suitable basis and using a filtration induced by an additional annular grading, as in \cite{Ro}. Given a collection of disjoint simple closed curves $\Cs \subset \A$, each circle in $\Cs$ is assigned the module $A_\al$, with the module assigned to a trivial circle concentrated in annular degree zero. The essential circles in $\Cs$ are naturally ordered. For each essential circle $C$ in $\Cs$ we equip its module $A_\al$ with a distinguished homogeneous basis, either $\{1, X-\al_0\}$ or $\{1, X-\al_1\}$, depending in an alternating manner on the position of $C$. We show that maps assigned to cobordisms are non-decreasing with respect to the annular grading.

A feature of the equivariant theory is that the dotted product cobordism on a non-contractible circle in $\A$, 
\begin{equation*}
\begin{gathered}
\includegraphics{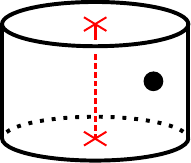}
\end{gathered}\, ,
\end{equation*}
is not sent to the zero map. Algebraically, this says that multiplication by $X$ on an essential circle is nonzero in the equivariant theory. On the other hand, this cobordism evaluates to zero in APS homology and also in the quantum annular homology \cite{BPW}.

The paper is organized as follows. In Section \ref{sec:Khovanov homology} we review Khovanov homology using the framework of the Bar-Natan cobordism category \cite{BN2}. The remaining parts of Section \ref{sec:some link homology theories} give an overview of Frobenius algebras $A$, $A_\al$, and $A_\ald$, following \cite{KR}. Section \ref{sec:annular Khovanov homology} reviews annular Khovanov homology. Our equivariant theory is defined in Section \ref{sec:defining the functor}. In Section \ref{sec:inverting D equiv annular} we study a further extension appearing in \cite{KR}, which is obtained by inverting an element $\D \in A_\al$. We prove an analogue of \cite[Theorem 4.2]{Le}, that for a $k$-component annular link, the homology obtained by inverting $\D$ is free of rank $2^k$. In Section \ref{sec:dotted TL} we recall the Temperley-Lieb category and its relation to annular Khovanov homology, following observations in \cite{BPW}. This perspective leads to a natural equivariant analogue of the Temperley-Lieb category and algebra, where strands may carry dots. 
\vskip1em
\textbf{Acknowledgements.} I would like to thank Mikhail Khovanov for suggesting this project, for many helpful discussions, and for comments on earlier versions of the paper. I would also like to thank my advisor Slava Krushkal. The author was supported by NSF RTG Grant DMS-1839968.

\section{Some link homology theories}\label{sec:some link homology theories}
We review Bar-Natan's approach to Khovanov homology and describe four Frobenius algebras, all of which have appeared in the literature and yield homology for links in $\R^3$. 

\subsection{Khovanov homology}\label{sec:Khovanov homology}

We start with a brief overview of the Bar-Natan category $\BN$ and the construction of the chain complex $[[D]]$ assigned to a link diagram $D$; for a complete treatment see \cite{BN2}. 

First, recall the (dotted) Bar-Natan category $\BN$. Let $I :=[0,1]$ denote the unit interval. Objects of $\BN$ are formal direct sums of formally graded collections of simple closed curves in the plane $\R^2$. Morphisms are matrices whose entries are formal $\Z$-linear combinations of dotted cobordisms properly embedded in $\R^2 \times I$, modulo isotopy relative to the boundary, and subject to the local relations shown in Figure \ref{fig:BN relations}. For the remainder of the paper, all cobordisms are assumed to possibly carry dots, unless specified otherwise.

Let $A_0= \Z[X]/(X^2)$. The trace
\[
\eps_0 \colon A_0 \to \Z,\, 1\mapsto 0,\, X\mapsto 1
\]
makes $A_0$ a Frobenius algebra, with comultiplication
\begin{align*}
A_0 & \to A_0 \o A_0\\
1 & \mapsto X \o 1 + 1\o X\\
X & \mapsto X\o X 
\end{align*}
This is the Frobenius algebra underlying $\sl_2$ link homology \cite{Kh1}.

\begin{figure}[H]
\centering
\begin{subfigure}[b]{.3\textwidth}
\begin{center}
\includegraphics{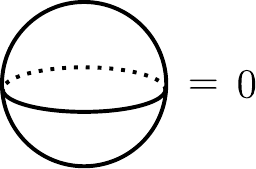}
\end{center}
\caption{Sphere}\label{fig:sphere}
\end{subfigure}%
\begin{subfigure}[b]{.4\textwidth}
\begin{center}
\includegraphics{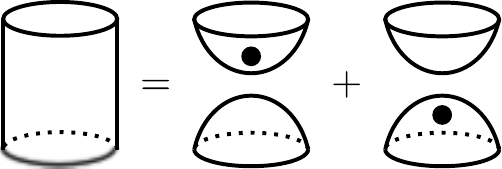}
\end{center}
\caption{Neck-cutting}\label{fig:neck-cutting}
\end{subfigure}\\
\vskip1em
\begin{subfigure}[b]{.3\textwidth}
\begin{center}
\includegraphics{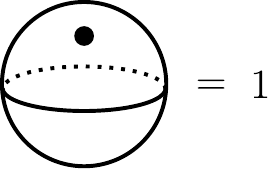}
\end{center}
\caption{Dotted sphere}\label{fig:dotted sphere}
\end{subfigure}%
\begin{subfigure}[b]{.35\textwidth}
\begin{center}
\includegraphics{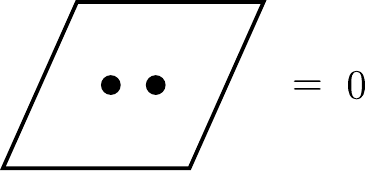}
\end{center}
\caption{Two dots}\label{fig:two dot}
\end{subfigure}
\caption{Relations in $\BN$}\label{fig:BN relations}
\end{figure}

The Bar-Natan relations (Figure \ref{fig:BN relations}) can be seen as arising from the structure of $A_0$ in the following way. Interpret the cup cobordism as the unit map
\[
\eta_0\colon \Z \to A_0,\, 1\mapsto 1,
\]
the cap as the trace $\eps_0$, and a dot as multiplication by $X\in A_0$. Then the sphere relation corresponds to 
\[
\eps_0(\eta_0(1))=0
\]
while the dotted sphere comes from $\eps_0(X) = 1$. The two dots relation corresponds to the relation $X^2 =0$ in $A_0$. Neck-cutting is a topological incarnation of the algebraic relation
\[
y = X \eps_0(y) + \eps_0(Xy),
\]
which holds for every $y\in A_0$. 

For a cobordism $S\subset \R^2 \times I$, let $d(S)$ denote the number of dots on $S$, and set the degree of $S$ to be 
\begin{equation}\label{eq:grading on cobs}
\deg(S) = -\chi(S) + 2d(S).
\end{equation}
Note that the relations in Figure \ref{fig:BN relations} are homogeneous. Define the quantum grading $\qdeg$ on $A_0$ by setting 
\begin{equation}\label{eq:1 and X gradings}
\qdeg(1) = -1 \hskip2em \qdeg(X) = 1.
\end{equation}

\begin{remark}\label{rmk:gradings A_0}
The grading elsewhere in the literature \cite{Kh1, BN1, BN2} is opposite that of the one appearing here. Moreover, viewing $A_0$ as an algebra, it is more natural to set $1$ and $X$ in degrees $0$ and $2$, respectively, to make the multiplication grading-preserving. However, when viewing $A_0$ as a module, degrees are balanced around $0$ as above. 
\end{remark}

For a ring $\k$, let $\k\gmod$ denote the category of $\Z$-graded $\k$-modules and graded maps (of any degree) between them. 
The Frobenius algebra $A_0$ defines a $(1+1)$-dimensional TQFT, and it descends to a graded, additive functor 
\begin{equation}\label{eq:tqft F}
\Fn \colon \BN \to \Z\gmod.
\end{equation}
which is $\Z$-linear on each morphism space. In fact, due to delooping \cite{BN3}, any such functor is determined by its value on the empty diagram.

Let $D$ be a diagram for an oriented link $L\subset \R^3$. We recall the construction $[[D]]$ from \cite{BN2}, which is a chain complex over the additive category $\BN$. One first forms the \emph{cube of resolutions} as follows. Label the crossings of $D$ by $1,\ldots, n$. Every crossing may be resolved in two ways, called the \emph{0-smoothing} and \emph{1-smoothing}, shown in Figure \ref{fig:0 and 1 smoothings}. For each $u = (u_1,\ldots, u_n)\in \{0,1\}^n$, perform the $u_i$-smoothing at the $i$-th crossing. The resulting diagram is a collection of disjoint simple closed curves in the plane $\R^2$, and we denote it by $D_u$. Thinking of elements of $\{0,1\}^n$ as vertices of an $n$-dimensional cube, decorate the vertex $u$ by the smoothing $D_u$. 

\begin{figure}
\centering
\includegraphics{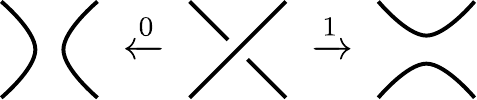}
\caption{The two smoothings at a crossing}\label{fig:0 and 1 smoothings}
\end{figure}

Next, let $u=(u_1,\ldots, u_n)$ and $v=(v_1,\ldots, v_n)$ be vertices which differ only in the $i$-th entry, where $u_i = 0$ and $v_i=1$. Then the diagrams $D_u$ and $D_v$ are the same outside of a small disk around the $i$-th crossing. There is a cobordism from $D_u$ to $D_v$, which is the obvious saddle ($1$-handle attachment) near the $i$-th crossing and the identity (product cobordism) elsewhere. Denote this cobordism by $d_{u,v}$, and decorate each edge of the $n$-dimensional cube by these saddle cobordisms. This forms a commutative cube in the category $\BN$. There is a way to assign $s_{u,v} \in \{0,1\}$ to each edge in the cube so that multiplying the edge map $d_{u,v}$ by $(-1)^{s_{u,v}}$ results in an anti-commutative cube (see \cite[Section 2.7]{BN2}).

For $u=(u_1,\ldots, u_n) \in \{0,1\}^n$, set $\lr{u} = \sum_i u_i$. Now, form the chain complex $[[D]]$ over $\BN$ by setting 
\[
[[D]]^i = \bigoplus_{\vert u \vert = i+ n_-} D_{u}\{n_- - n_+ -i \} 
\] 
in homological degree $i$, where $n_-$, $n_+$ denote the number of negative and positive crossings in $D$, and $\{-\}$ is the upwards grading shift in $\BN$. The differential is given on each summand by the edge map $(-1)^{s_{u,v}}d_{u,v}$. Anti-commutativity of the cube ensures that $[[D]]$ is a chain complex. 

The relations in Figure \ref{fig:BN relations} imply the $S$, $T$, and $4Tu$ relations from \cite{BN2}. 

\begin{theorem}\emph{(\cite[Theorem 1]{BN2})}\label{thm:[[D]] is invariant}
If diagrams $D$ and $D'$ are related by a Reidemeister move, then $[[D]]$ and $[[D']]$ are chain homotopy equivalent.
\end{theorem}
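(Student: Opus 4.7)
The plan is to treat each Reidemeister move separately and, in each case, exhibit an explicit pair of chain maps between the complexes assigned to the two sides together with chain homotopies witnessing their composition is the identity. A key point is that the construction $[[-]]$ is local in the sense that a tangle inserted into a fixed diagram produces a complex which is the tensor product (in the planar-algebra sense over $\BN$) of the tangle complex with the rest of the diagram. Hence it suffices to produce the homotopy equivalences for each move in a small disk, treating the outside as the identity.

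For R1, the crossing yields a two-term complex whose $0$-smoothing is a vertical strand together with a disjoint simple closed curve and whose $1$-smoothing is the plain vertical strand, connected by a saddle. I would write down explicit chain maps between this complex and the one-term complex consisting of the plain strand, using a dot placed on the small circle to produce a map in one direction and a capping cobordism in the other, and verify the homotopy identities using neck-cutting together with the sphere and dotted sphere relations (this is essentially Bar-Natan's delooping isomorphism). For R2, the cube of resolutions is a square; two diagonally opposite vertices carry the two parallel strands, one with an extra disjoint circle and one without, while the other two vertices carry the cup-cap smoothings. Delooping the extra circle turns one of the saddle edges into an isomorphism, and the formal Gaussian elimination lemma in a chain complex over an additive category then cancels two of the four vertices, leaving only the pair of parallel strands concentrated in homological degree zero.

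For R3, I would follow Bar-Natan's strategy of bootstrapping from R2 rather than writing down the full eight-vertex chain equivalence by hand. One resolution at any single crossing of the R3 tangle produces a diagram on which an R2 move can be performed, and the R2 invariance established in the previous step identifies the corresponding subcomplexes on the two sides of R3; a careful comparison of the remaining smoothings, using only neck-cutting and the two-dot relation (which together imply the $4Tu$ relation), then yields the desired chain homotopy equivalence. The main obstacle is R3: it cannot be reduced by local delooping alone, and the direct verification manipulates eight vertices and twelve saddle edges, so the work lies in setting up the argument so that the R2 equivalence does the bulk of the computation and the remaining identities reduce to the local relations in Figure \ref{fig:BN relations}.
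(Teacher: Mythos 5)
The paper does not actually prove this statement: it observes that the local relations of Figure \ref{fig:BN relations} imply the $S$, $T$, and $4Tu$ relations and then invokes \cite[Theorem 1]{BN2} wholesale. Your sketch is therefore being compared against Bar-Natan's proof. In outline it is the standard argument and is correct: locality of $[[-]]$ reduces everything to the tangle level, and R1/R2 are handled by explicit local equivalences. Note, though, that the delooping-plus-Gaussian-elimination route you describe is the later streamlining from \cite{BN3}; in \cite{BN2} itself R1 and R2 are proved by writing down the homotopy equivalences and homotopies directly, with the $4Tu$ relation doing the work. Either route is fine here since the dotted relations of Figure \ref{fig:BN relations} imply $4Tu$ and also furnish the delooping isomorphism. (Minor bookkeeping point: for R2, after delooping you have five objects and need two successive Gaussian eliminations, not one, to reach the single degree-zero object.)

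The one place your sketch papers over the real technical content is R3. Knowing that resolving one crossing produces two layers, one of which is R2-equivalent to the corresponding layer on the other side, does \emph{not} by itself yield an equivalence of the total complexes: the total complex is the mapping cone of the saddle connecting the two layers, and a homotopy equivalence of one side of a cone need not induce an equivalence of cones. Bar-Natan's proof requires (i) that the R2 equivalences be \emph{strong deformation retracts}, and (ii) a cone-invariance lemma (\cite[Lemma 4.5]{BN2} and its companion) saying that composing the defining morphism of a cone with a strong deformation retract does not change the homotopy type of the cone, together with a check that the retracts intertwine the connecting saddle maps on the nose. Your phrase ``a careful comparison of the remaining smoothings'' is hiding exactly this step; as written, the R3 argument is incomplete without it.
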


Thus to obtain link homology, it suffices to apply a functor from $\BN$ into an abelian category, and Theorem \ref{thm:[[D]] is invariant} guarantees that the homotopy class of the resulting chain complex is a link invariant. In particular, the TQFT \eqref{eq:tqft F} yields a chain complex  
\[
CKh(D) := \Fn([[D]])
\]
of graded abelian groups. After reversing the quantum grading, this is the chain complex appearing in \cite[Section 7]{Kh1}. 

\subsection{$U(2)$-equivariant Khovanov homology}\label{sec:equivariant Khovanov homology}

This section reviews the so-called $U(2)$-equivariant Frobenius pair, denoted $(R,A)$. Although this extension is of general importance, it is not necessary for our construction in Section \ref{sec:equivariant annular Khovanov homology}; in fact, in Section \ref{sec:preliminary observation} we note that an analogue of annular APS homology using $(R,A)$ is not possible. 

Consider the graded ring
\[
R = \Z[E_1, E_2] 
\]
with $\deg(E_1) = 2$, $\deg(E_2) = 4$. The $R$-algebra
\[
A = R[X]/(X^2 - E_1 X + E_2)
\]
equipped with the trace 
\[
\eps \colon A\to R,\, 1 \mapsto 0, X\mapsto 1
\]
is a Frobenius algebra over $R$. The rings $R$ and $A$ are the  $U(2)$-equivariant cohomology with $\Z$ coefficients of a point and $\mathbb{CP}^1$, respectively \cite{Kh3}. The Frobenius algebra $A$ determines a link homology theory as in Section \ref{sec:Khovanov homology}, obtained by applying the corresponding TQFT to the formal complex $[[D]]$.

\subsection{$U(1) \times U(1)$-equivariant  Khovanov homology}\label{sec:equivariant Khovanov homology extension}

In this section we review an extension of the Frobenius pair $(R,A)$. This extension was studied in \cite{KR} and is central to our construction in Section \ref{sec:equivariant annular Khovanov homology}. 

Let 
\[
R_\al = \Z[\al_0, \al_1],
\]
and consider the $R_\al$-algebra
\[
A_\al = R_\al[X]/((X-\al_0)(X-\al_1)).
\]
The trace 
\[
\eps_\al \colon A_\al \to R_\al,\, 1\mapsto 0,\, X\mapsto 1.
\]
makes $A_\al$ into a Frobenius algebra, with comultiplication
\begin{align*}
\Delta \colon  A_\al & \to A_\al\otimes A_\al\\
 1 & \mapsto (X-\al_0)\otimes 1 + 1\otimes (X-\al_1)\\
 X & \mapsto X\otimes X - \al_0 \al_1 1\otimes 1 .
\end{align*}
There is an inclusion $(R, A) \hookrightarrow (R_\al, A_\al)$ given by identifying $E_1, E_2\in R$ with the elementary symmetric polynomials in $R_\al$,
\[
E_1 \mapsto \al_0 + \al_1 \hskip2em E_2 \mapsto \al_0\al_1.
\]
As noted in \cite{KR}, $R_\al$ and $A_\al$ are the $U(1) \times U(1)$-equivariant cohomology with $\Z$ coefficients of a point and $2$-sphere $S^2$, respectively.

Let $\BN_\al$ denote the Bar-Natan category subject to relations coming from $A_\al$. Objects of $\BN_\al$ are formal direct sums of formally graded collections of simple closed curves in the plane $\R^2$. Morphisms are matrices whose entries are formal $R_\al$-linear combinations of dotted cobordisms properly embedded in $\R^2 \times I$, modulo isotopy relative to the boundary, and subject to the local relations shown in Figure \ref{fig:BNa relations}. As outlined in Section \ref{sec:Khovanov homology}, these topological relations correspond to algebraic relations in the Frobenius algebra $A_\al$. 

\begin{remark}\label{rmk:cob cat BNa is induced from U(2)}
We note that $\BN_\al$ is induced from the corresponding $U(2)$-equivariant cobordism category with relations dictated by $(R,A)$, since the relations involve only symmetric polynomials in $\al_0, \al_1$. 
\end{remark}

\begin{figure}
\centering
\begin{subfigure}[b]{.3\textwidth}
\begin{center}
\includegraphics{BN_rel1.pdf}
\end{center}
\caption{Sphere}\label{fig:alpha sphere}
\end{subfigure}%
\begin{subfigure}[b]{.7\textwidth}
\begin{center}
\includegraphics{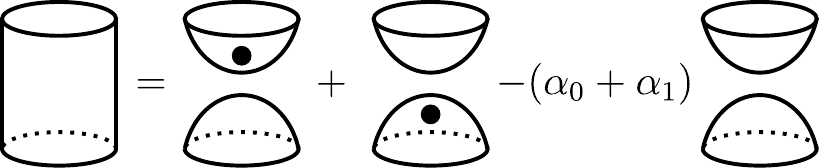}
\end{center}
\caption{Neck-cutting}\label{fig:alpha neck-cutting}
\end{subfigure}\\
\vskip1em
\begin{subfigure}[b]{.3\textwidth}
\begin{center}
\includegraphics{BN_rel3.pdf}
\end{center}
\caption{Dotted sphere}\label{fig:alpha dotted sphere}
\end{subfigure}%
\begin{subfigure}[b]{.65\textwidth}
\begin{center}
\includegraphics{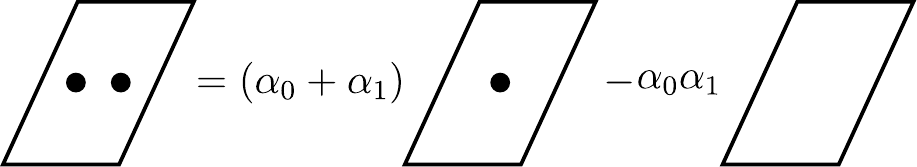}
\end{center}
\caption{Two dots}\label{fig:alpha two dot}
\end{subfigure}
\caption{Relations in $\BN_\al$}\label{fig:BNa relations}
\end{figure}

For a cobordism $S\subset \R^2\times I$, define the degree of $S$ as in \eqref{eq:grading on cobs}, and put $\al_0, \al_1\in R_\al$ in degree $2$. Note that the relations in Figure \ref{fig:BNa relations} are homogeneous. The algebra $A_\al$ is a free $R_\al$-module with basis $\{1, X\}$. Using the same notation as in \eqref{eq:1 and X gradings}, define a grading $\qdeg$ on $A_\al$ by setting 
\begin{equation}\label{eq:gradings A alpha}
\qdeg(1) = -1 \hskip2em \qdeg(X) = 1.
\end{equation}

\begin{remark}\label{rmk:gradings}
Viewing $A_\al$ as an $R_\al$-algebra, it is more natural to set $1$ and $X$ in degrees $0$ and $2$, respectively, so that multiplication in $A_\al$ is grading-preserving. When viewing $A_\al$ as an $R_\al$-module with homogeneous basis $\{1, X\}$ according to the grading \eqref{eq:gradings A alpha}, the elements $X-\al_0$ and $X-\al_1$ should be interpreted as $X - \al_0 \cdot 1$ and $X-\al_1 \cdot 1$, which are homogeneous of degree $1$. In either case, multiplication by $X$ is a degree $2$ endomorphism of $A_\al$.  
\end{remark}

The Frobenius algebra $A_\al$ defines a two-dimensional TQFT, and it descends to a graded, additive functor 
\begin{equation}\label{eq:tqft Falpha}
\Fa \colon \BN_\al \to R_\al\gmod
\end{equation}
which is $R_\al$-linear on each morphism space. Moreover, the following diagram commutes
\begin{equation}\label{eq:diagram1}
\begin{tikzcd}
\BN_\al \ar[r,"\Fa"] \ar[d] & R_\al\gmod \ar[d]\\
\BN \ar[r, "\Fn"] \ar[r] & \Z\gmod
\end{tikzcd}
\end{equation}
where the vertical maps are obtained by setting $\al_0 = \al_1=0$.

Given a diagram $D$ for an oriented link $L \subset \R^3$, form the chain complex $[[D]]$ as described in Section \ref{sec:Khovanov homology}. We may view $[[D]]$ as a chain complex over $\BN_\al$. The relations in Figure \ref{fig:BNa relations} imply the $S$, $T$, and $4Tu$ relations from \cite{BN2}, so by \cite[Theorem 1]{BN2}, the homotopy class of $[[D]]$ is an invariant of $L$. It follows that the chain complex obtained by applying $\Fa$ to $[[D]]$ is an invariant of $L$ up to chain homotopy equivalence.

\subsection{Inverting the discriminant and Lee homology}\label{sec:inverting D} 

We recall from \cite{KR} a further extension of the Frobenius pair $(R_\al, A_\al)$. Let 
\begin{equation}\label{eq:D}
\D = (\al_0 -\al_1)^2
\end{equation}
denote the discriminant of the quadratic polynomial $(X-\al_0)(X-\al_1) \in R_\al[X]$. Let 
\[
R_\ald = R_\al [ \D^{-1}]
\]
denote the ring obtained by inverting $\D$ (equivalently, one may invert $\al_0 - \al_1$) and let
\[
A_\ald = A_\al \o_{R_\al} R_\ald
\]
be the extension of $A_\al$ to an $R_\ald$-algebra. Let $\Fad$ denote the composition 
\[
\BN_\al \rar{\Fa} R_\al \gmod \to R_\ald \gmod
\]
where the second functor is extension of scalars, $(-)\o_{R_\al} R_\ald$. For a link $L\subset \R^3$ with diagram $D$, let 
\[
CKh_{\ald}(D) := \Fad([[D]])
\]
denote the resulting chain complex. It is an invariant of $L$ up to chain homotopy equivalence, and we will denote its homology by $Kh_{\ald}(L)$. 

The elements 
\begin{equation}\label{eq:e_0 and e_1}
e_0  = \frac{X-\al_0}{\al_1-\al_0},\hskip1em e_1 = \frac{X - \al_1}{\al_0 - \al_1} \in A_\ald. 
\end{equation}
form a basis for $A_\ald$ and satisfy 
\[
e_0 + e_1 = 1,\ e_0^2 = e_0,\ e_1^2 = e_1,\ e_0e_1=0,
\]
so that the algebra $A_\ald$ decomposes as a product, $A_\ald = R_\ald e_0 \times R_\ald e_1$. With respect to the basis $\{e_0, e_1\}$, comultiplication in $A_\ald$ is simply given by
\begin{equation}\label{eq:comultiplication D}
\begin{split}
\Delta(e_0) &= (\al_1 - \al_0) e_0 \o e_0 \\
\Delta(e_1) &= (\al_0 - \al_1) e_1\o e_1.
\end{split}
\end{equation}

As noted in \cite[Section 1.2]{KR}, the TQFT $\Fad$ is essentially the Lee deformation \cite{Le}. By \cite[Theorem 4.2]{Le}, the Lee homology of a $k$-component link is free (over $\Q$) of rank $2^k$. A quick alternate proof can be found in the final remark in \cite{We}. The following is stated in \cite{KR} without proof, but the arguments in \cite{We} apply without modification. 

\begin{proposition}\label{prop:Lee rank}
For a link $L\subset \R^3$ with $k$ components, the homology $Kh_{\ald}(L)$ is a free $R_\ald$-module of rank $2^k$.
\end{proposition}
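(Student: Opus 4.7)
The plan is to follow the strategy of Lee \cite{Le}, with the streamlined argument of \cite{We} adapted to the ground ring $R_\ald$. Fix a diagram $D$ of $L$. For each of the $2^k$ orientations $o$ of $L$ I would construct an explicit cycle $s_o \in CKh_{\ald}(D)$ and then show that the classes $\{[s_o]\}$ form a free $R_\ald$-basis of $Kh_{\ald}(L)$.

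The first step is the construction of the canonical cycles. Given an orientation $o$, resolve each crossing of $D$ according to $o$ to obtain the oriented resolution $D_o$, a disjoint union of Seifert circles each carrying an induced orientation. Label each such circle by the idempotent $e_0 \in A_\ald$ or $e_1 \in A_\ald$ according to whether its orientation agrees or disagrees with a fixed reference, and tensor these labels over all circles of $D_o$ to obtain an element $s_o$ concentrated at the vertex of the cube of resolutions corresponding to $D_o$. A direct check, using the identity $e_0 e_1 = 0$ together with the comultiplication formulas (\ref{eq:comultiplication D}), shows that the saddle maps out of $D_o$ annihilate $s_o$, so $s_o$ is a cycle.

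Linear independence of $\{[s_o]\}$ over $R_\ald$ then follows by specialization. The assignment $(\al_0, \al_1) \mapsto (1, -1)$ defines a ring homomorphism $R_\ald \to \Q$, under which $\D = 4$ remains invertible, and the base change of $CKh_{\ald}(D)$ along this map is Lee's original complex from \cite{Le}. Under this specialization the $s_o$ become Lee's canonical generators, which are $\Q$-linearly independent by \cite[Theorem 4.2]{Le}; hence $\{[s_o]\}$ are $R_\ald$-linearly independent in $Kh_{\ald}(L)$.

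To show these classes span $Kh_{\ald}(L)$, I would follow the argument of \cite{We}. In the idempotent basis $\{e_0, e_1\}$ of $A_\ald$, the saddle map at each crossing has matrix entries involving the factor $\al_1 - \al_0$, which is a unit after inverting $\D$. Iterated Gauss elimination on these invertible entries produces a chain homotopy equivalence between $CKh_{\ald}(D)$ and a complex supported only on the $2^k$ oriented resolutions of $L$ with zero differential, whose generators are precisely the canonical cycles $s_o$. The main obstacle is the bookkeeping in this last step: the Gauss eliminations must be performed consistently across all cube edges with the correct signs so that the surviving generators can be identified with the $s_o$. This is organized by induction on the number of crossings, as in \cite{We}.
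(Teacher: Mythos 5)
Your overall strategy is the one the paper takes: the paper gives no argument of its own beyond noting that the proof in the final remark of \cite{We} applies to $(R_\ald, A_\ald)$ without modification, and your last step --- delooping each circle via the idempotents $e_0, e_1$ and Gauss-eliminating the resulting edge maps, which are invertible because $\al_0 - \al_1$ is a unit in $R_\ald$ --- is exactly that argument. Note that this step alone already proves the whole proposition: it replaces $CKh_{\ald}(D)$ by a homotopy equivalent complex with zero differential and one free generator for each orientation, so freeness, spanning, and independence come out simultaneously and your separate independence argument is not needed.

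Two details are wrong as stated, however. First, the labeling rule for the canonical cycles: labeling a Seifert circle of $D_o$ by $e_0$ or $e_1$ according to its orientation alone does not produce a cycle. If a crossing of $D_o$ joins two \emph{nested} circles (as happens already for the Hopf link with parallel orientations), the two circles have the same rotational sense, your rule assigns them the same idempotent, and the merge map sends $e_i \o e_i \mapsto e_i \neq 0$. Lee's prescription adds to the orientation bit the mod-$2$ number of circles separating the given circle from infinity --- this is precisely the rule the paper records when defining $\s_o$ in Section \ref{sec:inverting D equiv annular} --- and with that correction the two circles at every crossing of $D_o$ receive distinct labels, so $e_0 e_1 = 0$ kills the outgoing differential. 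Second, linear independence over $R_\ald$ does not follow from the single specialization $(\al_0,\al_1) \mapsto (1,-1)$: a relation $\sum_o r_o [s_o] = 0$ with each $r_o$ in the kernel of $R_\ald \to \Q$ (for instance $r_o = \al_0 - 1$) would pass undetected. One must vary the specialization point, pass to the fraction field of $R_\ald$, or --- most simply --- drop this step and read independence off the Gauss-eliminated complex.
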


\section{Annular Khovanov homology}\label{sec:annular Khovanov homology}
We give an overview of annular Khovanov homology, also known as annular Asaeda-Przytycki-Sikora (APS) homology. It was originally defined in \cite{APS} as part of a broader categorification of the Kauffman bracket skein module of $I$-bundles over surfaces. A convenient reference for the annular setting is \cite{GLW}. 

Let $\A = S^1\times I$ denote the annulus. An \textit{annular link} is a link in the thickened annulus $\A\times I$, and its diagram is a projection onto the first factor of $\A \times I$. Embed $\A$ standardly in $\R^2$ as 
\[
\A = \{ x\in \R^2 \mid 1\leq \lr{x} \leq 2\},
\]
so that an annular link diagram and all of its smoothings are drawn in the punctured plane $\R^2\setminus (0,0)$. We represent the annulus in the plane by simply indicating the puncture using the symbol $\times$. Figure \ref{fig:annular link} illustrates an example of an annular link diagram. By a \emph{circle} in $\A$ we mean a smoothly and properly embedded $S^1$ in $\A$. There are two kinds of circles in $\A$: \emph{trivial} circles, which are contractible in $\A$, and \emph{essential} ones, which are not contractible.

\begin{figure}
\centering
\includegraphics{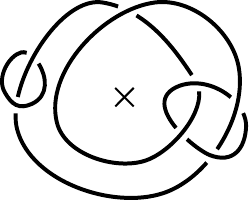}
\caption{An annular link diagram}\label{fig:annular link}
\end{figure}

Let $\BN(\A)$ denote the Bar-Natan category of the annulus. Its objects are formal direct sums of formally bigraded collections of simple closed curves in $\A$. Morphisms are matrices whose entries are formal $\Z$-linear combinations of dotted cobordisms properly embedded in $\A \times I$, modulo isotopy relative to the boundary, and subject to the local relations shown in Figure \ref{fig:BN relations}. The bidegree of a cobordism $S\subset \A \times I$ is defined to be
\begin{equation}\label{eq:annular bidegree}
(-\chi(S) + d(S), 0),
\end{equation}
where $d(S)$ is the number of dots on $S$. 

For a ring $\k$, denote by $\k\ggmod$ the category of $\Z \times \Z$-graded $\k$-modules and graded maps (of any bidegree) between them. We now describe the annular TQFT
\[
\Gn \colon \BN(\A) \to \Z\ggmod,
\]
which will be additive, graded, and $\Z$-linear on each morphism space. 

Let $\Cs \subset \A$ be a collection of $n$ trivial and $m$ essential circles. Embed $\A \times I$ standardly into $\R^2 \times I$, and apply the TQFT $\Fn$ from Section \ref{sec:Khovanov homology},
\[
\Fn(\Cs) = A_0^{\o n} \o A_0^{\o m}. 
\]
Define a second grading, called the \emph{annular} grading and denoted $\adeg$, on $\Fn(\Cs)$ in the following way. A tensor factor $A_0$ corresponding to a trivial circle is concentrated in annular degree $0$. For a factor $A_0$ corresponding to an essential circle, let
\[
v_0 = 1,\hskip2em v_1 = X.
\]
denote a basis for this copy of $A_0$, and set 
\[
\adeg(v_0) = -1 \hskip2em \adeg(v_1) = 1.
\]
Bigradings are summarized in Figure \ref{fig:bigradings}. 

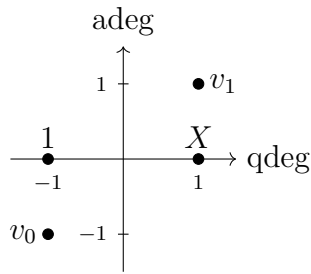
\begin{figure}
\centering
\begin{tikzpicture}
\draw[->] (-1.5,0) -- (1.5,0) node[right] {$\qdeg$};
\foreach \x in {-1, 1}
\draw[shift={(\x,0)}] (0pt,2pt) -- (0pt,-2pt) node[below] {\tiny $\x$};
\draw[->] (0,-1.5) -- (0,1.5) node[above] {$\adeg$};
\foreach \y in {-1,1}
\draw[shift={(0,\y)}] (2pt,0pt) -- (-2pt,0pt) node[left] {\tiny $\y$};
\filldraw[black] (1,1) circle (2pt) node[anchor=west] {$v_1$};
\filldraw[black] (-1,-1) circle (2pt) node[anchor=east] {$v_0$};
\filldraw[black] (1,0) circle (2pt) node[anchor=south] {$X$};
\filldraw[black] (-1,0) circle (2pt) node[anchor=south] {$1$};
\end{tikzpicture}
\caption{Bigradings, where $\{1, X\}$ corresponds to trivial circles, and $\{v_0, v_1\}$ correspond to essential ones. See also Remark \ref{rmk:gradings A_0}.}\label{fig:bigradings}
\end{figure}

The underlying abelian group of $\Gn(\Cs)$ is $\Fn(\Cs)$, and the bigrading is given by $(\qdeg, \adeg)$. For a cobordism $S\subset \A\times I$, first view $S$ as a surface in $\R^2 \times I$ and consider the map $\Fn(S)$. It is shown in \cite[Section 2]{Ro} that $\Fn(S)$ splits as a sum
\begin{equation}\label{eq:non alpha cob split}
\Fn(S) = \Fn(S)_0 + \Fn(S)_+
\end{equation}
where $\Fn(S)_0$ preserves $\adeg$ and $\Fn(S)_+$ increases $\adeg$. Set 
\[
\Gn(S) := \Fn(S)_0
\]
to be the $\adeg$-preserving part. It follows from \eqref{eq:non alpha cob split} that $\Gn$ is functorial with respect to composition of cobordisms. By construction, $\Gn(S)$ is a map of bidegree 
\[
(-\chi(S)+2d(S), 0)
\]
so the functor $\Gn$ is degree preserving on morphism spaces. We will refer to $\Gn$ as the \emph{annular TQFT}. 

To distinguish the bigraded modules assigned to trivial and essential circles, write 
\[
V = \Gn(C)
\]
if $C$ is an essential circle, with basis written as $\{v_0, v_1\}$, and keep the notation $A_0 = \Gn(C)$ when $C$ is trivial. Then if $\Cs \subset \A$ consists of $n$ trivial and $m$ essential circles, the module assigned to $\Cs$ is
\[
\Gn(\Cs) = A_0^{\o n} \o V^{\o m}.
\]

Given a diagram $D$ for an oriented annular link $L$, form the chain complex $[[D]]$ as described in Section \ref{sec:Khovanov homology}. The construction is completely local and crossings are away from the puncture $\times$. Thus we may view $[[D]]$ as a chain complex over $\BN(\A)$, with $\Z$-grading shifts $\{-\}$ in $\BN$ rewritten as a $\Z\times \Z$-grading shifts $\{-,0\}$ in $\BN(\A)$. Isotopies of annular links are described by Reidemeister moves away from the puncture, and it follows that the homotopy class of $[[D]]$, viewed as a chain complex over $\BN(\A)$, is an invariant of $L$. Therefore the chain complex 
\begin{equation}\label{eq:annular chain complex}
CKh^\A(D) := \Gn([[D]])
\end{equation}
is an invariant of $L$ up to chain homotopy equivalence.

An \emph{elementary cobordism} is one that has a single non-degenerate critical point with respect to the height function $\A \times I \to I$. It consists of a union of a product cobordism and a single cup, cap, or saddle. An elementary cobordism $S$ with $\d S$ consisting of trivial circles in $\A$ is assigned the same map by $\Fn$ and $\Gn$. We record the maps assigned to the four elementary saddles involving at least one essential circle, Figure \ref{fig:el saddles}. The vertical red arc is the central axis of $\A \times I \subset \R^2\times I$.

\noindent\begin{minipage}{.5\linewidth}
\begin{equation}\label{eq:formula1}
  \begin{split}
   V\o A_0 & \rar{\hyperref[fig:type1]{\operatorname{(I)}}} V \\  v_0 \o 1 & \mapsto v_0 \\ v_1\o 1 &\mapsto v_1 \\
    v_0 \o X & \mapsto 0 \\
    v_1 \o X & \mapsto 0
  \end{split}
\end{equation}
\end{minipage}%
\begin{minipage}{.35\linewidth}
\begin{equation}\label{eq:formula2}
  \begin{split}
    V \o V & \rar{\hyperref[fig:type2]{\operatorname{(II)}}} A_0 \\  v_0 \o v_0 &\mapsto 0 \\
v_1 \o v_0 & \mapsto X \\
v_0 \o v_1 & \mapsto X \\
v_1 \o v_1 & \mapsto 0
  \end{split}
\end{equation}
\end{minipage}

\vskip1em
 
\noindent\begin{minipage}{.49\linewidth}
\begin{equation}\label{eq:formula3}
  \begin{split}
  V & \rar{\hyperref[fig:type3]{\operatorname{(III)}}} V \o A_0 \\
v_0 & \mapsto v_0 \o X \\
v_1 & \mapsto v_1 \o X
  \end{split}
\end{equation}
\end{minipage} 
\begin{minipage}{.4\linewidth}
\begin{equation}\label{eq:formula4}
  \begin{split}
    A_0 & \rar{\hyperref[fig:type4]{\operatorname{(IV)}}} V\o V \\
1 &\mapsto v_0 \o v_1 + v_1 \o v_0\\
X & \mapsto 0
  \end{split}
\end{equation}
\end{minipage}
\vskip1em

\begin{figure}
\begin{subfigure}[b]{.2\textwidth}
\begin{center}
\includegraphics{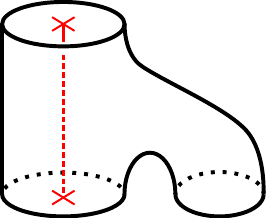}
\end{center}
\caption{Type (I)}\label{fig:type1}
\end{subfigure}
\begin{subfigure}[b]{.2\textwidth}
\begin{center}
\includegraphics{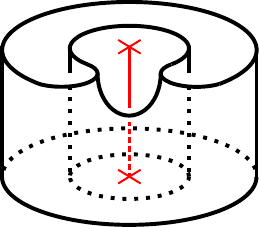}
\end{center}
\caption{Type (II)}\label{fig:type2}
\end{subfigure}
\begin{subfigure}[b]{.2\textwidth}
\begin{center}
\includegraphics{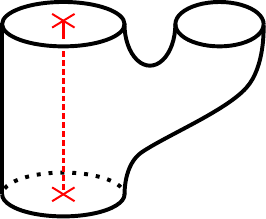}
\end{center}
\caption{Type (III)}\label{fig:type3}
\end{subfigure}
\begin{subfigure}[b]{.2\textwidth}
\begin{center}
\includegraphics{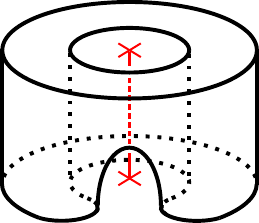}
\end{center}
\caption{Type (IV)}\label{fig:type4}
\end{subfigure}
\caption{Saddles involving essential circles}\label{fig:el saddles}
\end{figure}

From \eqref{eq:formula1}, we see that $X$ acts trivially on any essential circle. It follows that a cobordism with a component that carries a dot and a closed curve which is nonzero in $\pi_1(\A \times I)$ is assigned the zero map. Thus $\Gn$ factors through the relation shown in Figure \ref{fig:Boerner}, called Boerner's relation \cite{Bo}. Indeed, for an essential circle $C\subset \A$, there are no nonzero endomorphisms of $\Gn(C)$ with bidegree $(2,0)$. 

\begin{figure}
\centering
\includegraphics{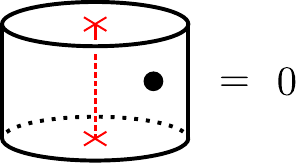}
\caption{Boerner's relation}\label{fig:Boerner}
\end{figure}

The category $\BN(\A)$ is monoidal, with monoidal product  given by taking two copies $\A_1, \A_2$ of $\A$ and gluing the boundary component $S^1\times \{1\}$ of $\A_1$ to the boundary component $S^1 \times \{0\}$ of $\A_2$. The annular TQFT $\Gn$ is evidently monoidal.

\section{Equivariant annular Khovanov homology}\label{sec:equivariant annular Khovanov homology}

We are interested in an annular version of the theory outlined in Section \ref{sec:equivariant Khovanov homology extension}. Precisely, the goal is to fill in the dashed arrow in the diagram
\[
\begin{tikzcd}\label{eq:dashed diagram}
\BN_\al(\A) \ar[r, dashed, "\Ga"] \ar[d] & R_\al\ggmod \ar[d]\\
\BN(\A) \ar[r, "\Gn"] & \Z\ggmod
\end{tikzcd}
\]
where the vertical arrows are obtained by setting $\al_0 = \al_1 = 0$. Section \ref{sec:preliminary observation} justifies working with the extension $(R_\al, A_\al)$ rather than $(R,A)$. The desired functor $\Ga$ is defined in Section \ref{sec:defining the functor}. Maps assigned to saddle cobordisms can be found in \eqref{eq:equiv formula1}--\eqref{eq:equiv formula4}. In Section \ref{sec:inverting D equiv annular} we invert $\D$ in the annular theory and show that the rank of the resulting homology depends only on the number of components.

\subsection{A preliminary observation}\label{sec:preliminary observation}

Before defining our equivariant annular TQFT, we note that the $U(2)$-equivariant Frobenius pair $(R,A)$ from Section \ref{sec:equivariant Khovanov homology} does not admit such a lift, under the minor assumption that modules assigned to circles are free. 

The ring $R=\Z[E_1, E_2]$ can be made bigraded, with bidegrees of $E_1$ and $E_2$ given by $(2,0)$ and $(4,0$), respectively. Let $M$ be a free $\Z \times \Z$-graded $R$-module with basis $m_-, m_+$ in bidegrees $(-1,-1)$ and $(1,1)$, respectively. Suppose $g \colon M\to M$ is an $R$-linear map of bidegree $(2,0)$. Then necessarily 
\begin{equation}\label{eq:map must be}
g(m_-) = n E_1 m_-
\end{equation}
for some $n\in \Z$. In particular, if $M$ is the module assigned to a single essential circle and $g$ is the map assigned to the cobordism in Figure \ref{fig:dotted id}, then the relation $X^2 - E_1X + E_2 = 0$
in $A$ implies 
\begin{equation}\label{eq:two dots must be}
g^2(m_-) - E_1 g(m_-) + E_2 m_- = 0.
\end{equation}
However, \eqref{eq:map must be} and \eqref{eq:two dots must be} are incompatible. 

\begin{figure}
\centering
\includegraphics{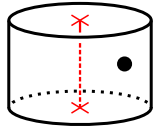}
\caption{Dotted product cobordism on an essential circle}\label{fig:dotted id}
\end{figure}

\subsection{The equivariant annular TQFT $\mathcal{G}_\alpha$}\label{sec:defining the functor}

Let $\BN_\al(\A)$ denote the Bar-Natan category of the annulus subject to the relations determined  by $A_\al$. Its objects are formal direct sums of formally bigraded collections of simple closed curves in $\A$. Morphisms are matrices whose entries are formal $R_\al$-linear combinations of dotted cobordisms properly embedded in $\A \times I$, modulo isotopy relative to the boundary, and subject to the local relations shown in Figure \ref{fig:BNa relations}. The bidegree of a cobordism $S\subset \A \times I$ is given by \eqref{eq:annular bidegree}. For an oriented annular link $L$ with diagram $D$, the formal complex $[[D]]$ over $\BN_\al(\A)$ is an invariant of $L$ up to chain homotopy equivalence.

Let $\Cs \subset \A$ be a collection of circles, and view $\Cs$ as embedded in $\R^2$. Consider $\Fa(\Cs)$ with the following additional annular grading, denoted $\adeg$ as in Section \ref{sec:equivariant Khovanov homology extension}. Define elements of $A_\al$, 
\begin{align*}
\v_0 = 1, \hskip2em \v_1 = X - \al_0, \\
\v_0'= 1, \hskip2em \v_1'= X - \al_1,
\end{align*}
with the annular gradings 
\begin{equation}
\adeg(\v_0) = \adeg(\v_0') = -1, \hskip2em \adeg(\v_1) = \adeg(\v_1') = 1. 
\end{equation}

\begin{remark}
The notation $v_0, v_1$ was also used in Section \ref{sec:annular Khovanov homology}. Setting $\al_0 = \al_1 = 0$ in the above expressions recovers $v_0, v_1$ in the non-equivariant setting. 
\end{remark}

Both $\{\v_0, \v_1\} = \{1, X - \al_0\}$ and $\{\v_0', \v_1'\} = \{1, X - \al_1\}$ is an $R_\al$-basis for $A_\al$. Together with the quantum grading, these equip $A_\al$ with two (isomorphic) structures of a bigraded $R_\al$-module, with the bigrading given by $(\qdeg, \adeg)$. The ground ring $R_\al$ lies in annular degree $0$. 

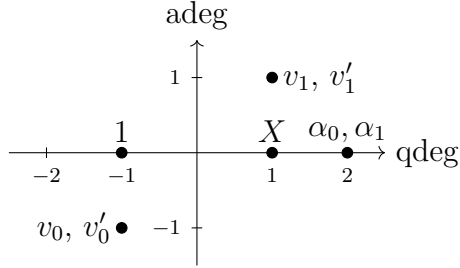
\begin{figure}
\centering
\begin{tikzpicture}
\draw[->] (-2.5,0) -- (2.5,0) node[right] {$\qdeg$};
\foreach \x in {-2, -1, 1, 2}
\draw[shift={(\x,0)}] (0pt,2pt) -- (0pt,-2pt) node[below] {\tiny $\x$};
\draw[->] (0,-1.5) -- (0,1.5) node[above] {$\adeg$};
\foreach \y in {-1,1}
\draw[shift={(0,\y)}] (2pt,0pt) -- (-2pt,0pt) node[left] {\tiny $\y$};
\filldraw[black] (1,1) circle (2pt) node[anchor=west] {$\v_1$, $\v_1'$};
\filldraw[black] (-1,-1) circle (2pt) node[anchor=east] {$\v_0$, $\v_0'$};
\filldraw[black] (1,0) circle (2pt) node[anchor=south] {$X$};
\filldraw[black] (-1,0) circle (2pt) node[anchor=south] {$1$};
\filldraw[black] (2,0) circle (2pt) node[anchor=south] {$\al_0, \al_1$};
\end{tikzpicture}
\caption{Bigradings, where $\{1, X\}$ corresponds to trivial circles, and $\{v_0, v_1\}$, $\{v_0', v_1'\}$ correspond to essential ones. See also Remark \ref{rmk:gradings}.}\label{fig:bigradings alpha}
\end{figure}

Let $\Cs \subset \A$ consist of $n$ trivial and $m$ essential circles, with the essential circles ordered from innermost (closest to the puncture $\times$) to outermost. Define the annular grading on 
\[
\Fa(\Cs) = A_\al^{\o n} \o A_\al^{\o m}
\]
by declaring that every copy of $A_\al$ corresponding to a trivial circle is concentrated in annular degree $0$ and that the copy of $A_\al$ corresponding to the $i$-th essential circle $( 1\leq i \leq m)$ is given the homogeneous basis 
\[
\{\v_0, \v_1\} = \{1, X - \al_0\}
\]
if $i$ is odd and 
\[
\{\v_0', \v_1'\} = \{1, X - \al_1\}
\]
if $i$ is even. In other words, the essential circles are assigned the homogeneous bases $\{ 1, X- \al_0\}$ or $\{1, X - \al_1\}$ in an alternating manner, with the innermost circle assigned $\{1, X- \al_0\}$. Bigradings are summarized in Figure \ref{fig:bigradings alpha}

As in Section \ref{sec:annular Khovanov homology}, it is convenient to distinguish the modules assigned to essential and trivial circles. Let $V_\al$ and $V'_\al$ denote the module $A_\al$ with homogeneous bases $\{\v_0, \v_1\}$ and $\{\v_0', \v_1'\}$, respectively. Then for a collection of circles $\Cs \subset \A$, the $i$-th essential circle in $\Cs$ is assigned $V_\al$ if $i$ is odd and $V_\al'$ if $i$ is even. We reserve the notation $A_\al$ for the module assigned to a trivial circle. Note that interchanging $\al_0 \lrar \al_1$ also interchanges $\v_0 \lrar \v_0'$ and $\v_1 \lrar \v_1'$.

\begin{lemma}\label{lem:el cobs split}

Let $S\subset \A\times I$ be an elementary cobordism. Viewing $S$ as a cobordism in $\R^2 \times I$, the map $\Fa(S)$ splits as a sum
\[
\Fa(S) = \Fa(S)_0 + \Fa(S)_2
\]
where $\Fa(S)_0$ preserves $\adeg$ and $\Fa(S)_2$ increases $\adeg$ by $2$.
\end{lemma}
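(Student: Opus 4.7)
The plan is to run a finite case analysis on the topological type of the elementary cobordism $S$. Up to isotopy relative to the boundary, $S$ is a product cobordism composed with a single cup, cap, or saddle, possibly decorated by dots. Since $\Fa$ is functorial, I would decompose $S$ as a dotless elementary piece composed with dotted identity cylinders on individual components and verify the splitting claim separately on (i) dotless cups, caps, and saddles, and (ii) a dotted identity on a single trivial or essential circle.

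The cases involving only trivial circles---cups, caps, trivial merges and splits, and a dot on a trivial circle---I would dispose of first. In all of these, the source and target modules are concentrated in $\adeg = 0$, so the statement holds immediately with $\Fa(S) = \Fa(S)_0$. The remaining cases, which carry the content of the lemma, are the four essential saddles of Figure \ref{fig:el saddles} together with the dotted identity on an essential circle. For each, I would write $\Fa(S)$ as the appropriate multiplication, comultiplication, or multiplication-by-$X$ map on $A_\al$, and then evaluate it on the alternating bases $\{\v_0, \v_1\} = \{1, X - \al_0\}$ and $\{\v_0', \v_1'\} = \{1, X - \al_1\}$ carried by the essential circles. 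Using the identities $X = \v_1 + \al_0 \v_0 = \v_1' + \al_1 \v_0'$ and $(X - \al_0)(X - \al_1) = 0$ in $A_\al$, each source basis vector will expand as a sum of monomials in the target basis whose annular degrees differ from the source by exactly $0$ or $+2$, which simultaneously produces the explicit formulas \eqref{eq:equiv formula1}--\eqref{eq:equiv formula4}.

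A bookkeeping remark worth making at the outset is that Types (II) and (IV) alter the number of essential circles by $\mp 2$, so the positions of any untouched essential circles change by an even amount and their alternating-basis assignment is preserved; this justifies performing the calculation locally. The alternating convention itself is essential: if both essential factors in a Type (IV) cobordism were assigned the same basis $\{\v_0, \v_1\}$, then the summand $1 \otimes (X - \al_1)$ in $\Delta(1)$, rewritten as $\v_0 \otimes \v_1 + (\al_0 - \al_1)\, \v_0 \otimes \v_0$, would contribute a nonzero term of annular degree $-2$, violating the conclusion.

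The main obstacle is handling the split saddles of Types (III) and (IV), where the cross term $X \otimes X$ in $\Delta(X) = X \otimes X - \al_0 \al_1 \cdot 1 \otimes 1$ must be rewritten in the two different alternating bases on the output, producing several monomials whose annular degrees have to be tracked. Once $X$ is expressed via $\v_1 + \al_0 \v_0$ in one factor and via $\v_1' + \al_1 \v_0'$ in the other and the products are expanded, the surviving terms all lie in annular-degree shift $0$ or $+2$, and the decomposition $\Fa(S) = \Fa(S)_0 + \Fa(S)_2$ can be read off directly.
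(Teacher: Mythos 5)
Your proposal is correct and follows essentially the same route as the paper: a direct case check of the four essential saddle types in the alternating bases $\{1, X-\al_0\}$ and $\{1, X-\al_1\}$, with the trivial-circle cases dismissed immediately and the nesting issue handled by observing that the positions of essential circles only shift in a way compatible with the alternating convention. Your explicit treatment of the dotted cylinder on an essential circle and your remark on why the alternating convention is forced are welcome additions, but the substance of the argument is identical to the paper's.
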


\begin{proof}
 
If the saddle component of $S$ involves only trivial circles then the claim is immediate, since $\Fa(S) = \Fa(S)_0$ in this case. We verify the claim for the four elementary cobordisms in Figure \ref{fig:el saddles} by rewriting $\Fa(S)$ in terms of the bases for the circles involved. Terms where $\adeg$ is increased by $2$ are boxed.

\noindent\begin{minipage}{.5\textwidth}
\begin{equation*}
  \begin{split}
   V_\al \o A_\al & \rar{\hyperref[fig:type1]{\operatorname{(I)}}} V_\al \\  \v_0 \o 1 & \mapsto \v_0 \\ \v_1\o 1 &\mapsto \v_1 \\
    \v_0 \o X & \mapsto \al_0 \v_0 +\fbox{$\v_1$} \\
    \v_1 \o X & \mapsto \al_1 \v_1
  \end{split}
\end{equation*}
\end{minipage}%
\begin{minipage}{.38\textwidth}
\begin{equation*}
  \begin{split}
   V_\al \o V_\al' & \rar{\hyperref[fig:type2]{\operatorname{(II)}}} A_\al \\  \v_0 \o \v_0' &\mapsto \fbox{$1$} \\
\v_1 \o \v_0' & \mapsto X - \al_0 \\
\v_0 \o \v_1' & \mapsto X - \al_1  \\
\v_1 \o \v_1' & \mapsto 0
  \end{split}
\end{equation*}
\end{minipage}

\vskip1em

\noindent\begin{minipage}{.49\linewidth}
\begin{equation*}
  \begin{split}
   V_\al & \rar{\hyperref[fig:type3]{\operatorname{(III)}}} V_\al \o A_\al \\
\v_0 & \mapsto \v_0 \o X - \al_1 \v_0 \o 1 + \fbox{$\v_1 \o 1$}\\
\v_1 & \mapsto \v_1 \o X - \al_0 \v_1 \o 1
  \end{split}
\end{equation*}
\end{minipage} 
\begin{minipage}{.5\linewidth}
\begin{equation*}
  \begin{split}
   A_\al & \rar{\hyperref[fig:type4]{\operatorname{(IV)}}} V_\al\o V_\al' \\
1 &\mapsto \v_0 \o \v_1' + \v_1 \o \v_0'\\
X & \mapsto  \al_0 \v_0 \o \v_1' + \al_1 \v_1 \o \v_0' + \fbox{$\v_1 \o v_1'$}
  \end{split}
\end{equation*}
\end{minipage}
\vskip1em
\noindent

Our assignment for essential circles depends on nesting, so strictly speaking the above calculations do not handle all cases. However, note that for types \hyperref[fig:type3]{(I)} and \hyperref[fig:type3]{(II)}, the position of the essential circle does not change, and for types \hyperref[fig:type3]{(III)} and \hyperref[fig:type3]{(IV)}, the two essential circles involved in the saddle must be consecutive in the ordering. Thus a full verification amounts to interchanging $\v_0 \lrar \v_0'$, $\v_1 \lrar \v_1'$ in the input of above maps. One may check that this amounts to interchanging $\v_0 \lrar \v_0'$, $\v_1 \lrar \v_1'$, and $\al_0 \lrar \al_1$ in the output.
\end{proof}

\begin{corollary}\label{cor:cobs split}  
\begin{enumerate}[label= \emph{(\arabic*)}]
\item Let $S\subset \A \times I$ be a cobordism. Viewing $S$ as a cobordism in $\R^2 \times I$, the map $\Fa(S)$ splits as a sum
\[
\Fa(S) = \Fa(S)_0 + \Fa(S)_+
\]
where $\Fa(S)_0$ preserves $\adeg$ and $\Fa(S)_+$ increases $\adeg$. 
\item Let $S_1, S_2 \subset \A \times I$ be composable cobordisms. Then
\[
\Fa(S_2 S_1)_0 = \Fa(S_2)_0 \Fa(S_1)_0.
\] 
\end{enumerate}
\end{corollary}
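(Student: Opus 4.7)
The strategy is to reduce the general cobordism to the elementary pieces handled by Lemma~\ref{lem:el cobs split}, then use that ``$\adeg$-non-decreasing'' is closed under composition and under taking the $\adeg$-preserving summand.

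I would first set up a piece of linear algebra for bookkeeping. Any $R_\al$-linear map $f\colon M\to N$ between $\adeg$-graded modules admits a canonical decomposition $f=\sum_{k\in\Z} f_k$ where $f_k$ shifts $\adeg$ by $k$. Call $f$ \emph{$\adeg$-non-decreasing} if $f_k=0$ for all $k<0$; in that case $f=f_0+f_+$ with $f_+=\sum_{k>0}f_k$ strictly $\adeg$-increasing, and this decomposition is unique. The key observations are: (i) the class of $\adeg$-non-decreasing maps is closed under $R_\al$-linear combinations (as $R_\al$ sits in $\adeg=0$), composition, and includes the identity; (ii) for two composable $\adeg$-non-decreasing maps $f,g$, expanding the product of their graded decompositions gives $(fg)_k=\sum_{a+b=k}f_a g_b$, so $(fg)_0=f_0g_0$.

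Next, I would decompose an arbitrary (possibly dotted) cobordism $S\subset \A\times I$ into elementary pieces. Choose a Morse function on $S$ compatible with the height function $\A\times I\to I$, so that $S$ is expressed as a finite composition of product cobordisms, cups, caps, and saddles, each properly embedded in $\A\times I$ (not merely in $\R^2\times I$); dots on $S$ are placed on thin product ``dotted product'' pieces. Lemma~\ref{lem:el cobs split} directly yields that each undotted elementary piece induces an $\adeg$-non-decreasing map under $\Fa$. For a dotted product on a circle, one checks by a short direct computation that multiplication by $X$ is $\adeg$-non-decreasing: on a trivial circle with basis $\{1,X\}$ both basis vectors sit in $\adeg=0$; on an essential circle using the basis $\{\v_0,\v_1\}=\{1,X-\al_0\}$ one has $X\cdot\v_0=\al_0\v_0+\v_1$ and $X\cdot\v_1=\al_1\v_1$, and similarly for $\{\v_0',\v_1'\}$.

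Finally, part (1) follows by composing the elementary $\adeg$-non-decreasing maps using (i) and writing $\Fa(S)=\Fa(S)_0+\Fa(S)_+$ via the uniqueness of the non-decreasing decomposition. For part (2), functoriality of $\Fa$ gives $\Fa(S_2S_1)=\Fa(S_2)\Fa(S_1)$, and applying (ii) to this composition yields $\Fa(S_2S_1)_0=\Fa(S_2)_0\Fa(S_1)_0$ at once.

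\textbf{Main obstacle.} The one subtlety worth care is that the Morse-theoretic decomposition of $S$ must be arranged so that every elementary piece lies in $\A\times I$, so that Lemma~\ref{lem:el cobs split} (stated for annular elementary cobordisms) actually applies; this is the reason for choosing the Morse function to respect the projection $\A\times I\to I$. The only other nontrivial step is verifying directly that dotted product cobordisms are $\adeg$-non-decreasing on both $V_\al$ and $V_\al'$ (and on $A_\al$), since Lemma~\ref{lem:el cobs split} literally only treats saddles; as noted above, this is an immediate calculation in the chosen bases.
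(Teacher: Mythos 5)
Your proposal is correct and follows essentially the same route as the paper: decompose $S$ into elementary annular pieces, apply Lemma \ref{lem:el cobs split} to each, and expand the product of the graded decompositions so that the degree-preserving part of the composite is the composite of the degree-preserving parts. Your extra check that a dotted product cobordism is $\adeg$-non-decreasing (e.g.\ $X\cdot \v_0=\al_0\v_0+\v_1$, $X\cdot\v_1=\al_1\v_1$) is a worthwhile point of care that the paper leaves implicit, but it does not change the argument.
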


\begin{proof}
For $(1)$, write $S$ as a composition $S = S_n \cdots S_1$ where each $S_i$ is an elementary cobordism. Functoriality of $\Fa$ and Lemma \ref{lem:el cobs split} yield

\begin{align*}
\Fa(S) &= \Fa(S_n)  \cdots  \Fa(S_1)\\
& = \left( \Fa(S_n)_0 + \Fa(S_n)_2\right)  \cdots  \left( \Fa(S_1)_0 + \Fa(S_1)_2\right)\\
& = \Fa(S_n)_0 \cdots  \Fa(S_1)_0 + \text{terms that increase } \adeg .
\end{align*}
Therefore 
\[
\Fa(S)_0 = \Fa(S_n)_0 \cdots  \Fa(S_1)_0
\]
is the desired $\adeg$-preserving part, and the remaining terms constitute $\Fa(S)_+$. Statement (2) follows from (1) in a similar fashion.
\end{proof}

We are now ready for the main theorem. 

\begin{theorem}\label{thm:equivariant annular}
There exists a functor $\Ga \colon \BN_\al(\A) \to R_\al\ggmod$ such that the following diagram commutes 
\begin{equation*}
\begin{tikzcd}
\BN_\al(\A) \ar[r, "\Ga"] \ar[d] & R_\al\ggmod \ar[d]\\
\BN(\A) \ar[r, "\Gn"] & \Z\ggmod
\end{tikzcd}
\end{equation*}
where the vertical arrows are obtained by setting $\al_0 = \al_1 = 0$. 
\end{theorem}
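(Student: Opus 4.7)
The definition of $\Ga$ is essentially forced by Corollary \ref{cor:cobs split}. On objects, I take $\Ga(\Cs)$ to be the bigraded $R_\al$-module already described before the statement: tensor a copy of $A_\al$ for each circle, place trivial circles in annular degree $0$, and equip the $i$-th essential circle (in the innermost-to-outermost ordering) with the homogeneous basis $\{1, X-\al_0\}$ for $i$ odd and $\{1, X-\al_1\}$ for $i$ even. On a cobordism $S\subset \A\times I$, I embed $\A\times I \hookrightarrow \R^2\times I$ and set
\[
\Ga(S) \;:=\; \Fa(S)_0,
\]
the $\adeg$-preserving component of $\Fa(S)$ supplied by Corollary \ref{cor:cobs split}(1). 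Extend $\Ga$ $R_\al$-linearly to morphisms in $\BN_\al(\A)$.

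\textbf{Verifications.} There are four things to check. (a) \emph{Well-definedness on morphisms.} The local relations in Figure \ref{fig:BNa relations} hold in $\BN_\al$, hence $\Fa$ sends both sides of each relation to the same $R_\al$-linear map; taking the $\adeg$-preserving part commutes with equality, so $\Ga$ respects the relations. Because each relation is supported in a small disk and its local input/output consists of trivial circles, the annular grading conventions on essential circles play no role here. (b) \emph{Bigrading.} By construction $\Fa(S)$ is homogeneous in the quantum grading of degree $-\chi(S)+2d(S)$, and by definition $\Fa(S)_0$ preserves $\adeg$; thus $\Ga(S)$ has bidegree matching the bidegree of $S$ in $\BN_\al(\A)$. (c) \emph{Functoriality.} This is exactly Corollary \ref{cor:cobs split}(2): for composable $S_1, S_2$,
\[
\Ga(S_2 S_1)=\Fa(S_2 S_1)_0 = \Fa(S_2)_0\,\Fa(S_1)_0 = \Ga(S_2)\,\Ga(S_1).
\]
(d) \emph{Commutativity of the diagram.} Setting $\al_0=\al_1=0$ collapses both bases $\{1,X-\al_0\}$ and $\{1,X-\al_1\}$ to $\{1,X\}=\{v_0,v_1\}$, identifies $A_\al$ with $A_0$, and identifies $\Fa$ with $\Fn$; hence the $\adeg$-preserving part of $\Fa$ specializes to the $\adeg$-preserving part of $\Fn$, which is $\Gn$ by definition.

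\textbf{Main obstacle.} Everything above hinges on Corollary \ref{cor:cobs split}, whose content is Lemma \ref{lem:el cobs split}; the real work is the lemma, which has already been done in the excerpt by direct case analysis on the four elementary saddle types of Figure \ref{fig:el saddles}. The only subtle point for Theorem \ref{thm:equivariant annular} itself is checking that my alternating innermost-to-outermost basis convention on essential circles is compatible with saddle moves that change the set of essential circles. This is where I would be most careful: saddles of types (I) and (II) fix the positions (and hence bases) of the essential circles they touch, while saddles of types (III) and (IV) always involve two essential circles that must be consecutive in the nesting order and therefore carry the opposite pair of bases $\{\v_0,\v_1\}$ vs.\ $\{\v_0',\v_1'\}$. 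This is exactly the input that makes the formulas in Lemma \ref{lem:el cobs split} intrinsic (independent of where in the larger diagram the saddle occurs), and it also ensures that the remaining essential circles retain their previously assigned bases after the saddle. Once this consistency of the bases under elementary moves is in place, the proof reduces to invoking Corollary \ref{cor:cobs split} and observing the specialization $\al_0=\al_1=0$.
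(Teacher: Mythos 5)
Your proposal is correct and follows essentially the same route as the paper: define $\Ga$ on objects as $\Fa(\Cs)$ with the alternating bases and on cobordisms as the $\adeg$-preserving part $\Fa(S)_0$, then get well-definedness from the corresponding properties of $\Fa$, functoriality from Corollary \ref{cor:cobs split}(2), and commutativity of the square by specializing $\al_0=\al_1=0$. The paper checks the last point by explicitly comparing the formulas of Lemma \ref{lem:el cobs split} (with boxed terms deleted) against \eqref{eq:formula1}--\eqref{eq:formula4}, which is the concrete version of your specialization argument.
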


\begin{proof}
For a collection of circles $\Cs\subset \A$, set 
\[
\Ga(S) := \Fa(\Cs),
\]
with the bigrading $(\qdeg, \adeg)$ as defined earlier in this section. For a cobordism $S\subset \A \times I$, set 
\[
\Ga(S) := \Fa(S)_0
\]
as in Corollary \ref{cor:cobs split} (1). That $\Ga$ is well-defined on cobordisms and factors through the relations in $\BN_\al(\A)$ follows from the analogous statements for $\Fa$. Corollary \ref{cor:cobs split} (2) implies functoriality of $\Ga$. Finally, commutativity of the diagram follows from deleting the boxed terms and setting $\al_0 = \al_1 =0$ in the maps appearing in the proof of Lemma \ref{lem:el cobs split}, and comparing the result with the maps \eqref{eq:formula1}--\eqref{eq:formula4}. 
\end{proof}

Maps assigned to the four elementary saddles in Figure \ref{fig:el saddles} are recorded below. The full set of maps -- that is, if other essential circles are present -- can be obtained by interchanging $\al_0 \lrar \al_1$. 

\noindent\begin{minipage}{.5\textwidth}
\begin{equation}\label{eq:equiv formula1}
  \begin{split}
   V_\al \o A_\al & \rar{\hyperref[fig:type1]{\operatorname{(I)}}} V_\al \\  \v_0 \o 1 & \mapsto \v_0 \\ \v_1\o 1 &\mapsto \v_1 \\
    \v_0 \o X & \mapsto \al_0 \v_0 \\
    \v_1 \o X & \mapsto \al_1 \v_1
  \end{split}
\end{equation}
\end{minipage}%
\begin{minipage}{.38\textwidth}
\begin{equation}\label{eq:equiv formula2}
  \begin{split}
   V_\al \o V_\al' & \rar{\hyperref[fig:type2]{\operatorname{(II)}}} A_\al \\  \v_0 \o \v_0' &\mapsto 0 \\
\v_1 \o \v_0' & \mapsto X - \al_0 \\
\v_0 \o \v_1' & \mapsto X - \al_1  \\
\v_1 \o \v_1' & \mapsto 0
  \end{split}
\end{equation}
\end{minipage}

\vskip1em

\noindent\begin{minipage}{.49\linewidth}
\begin{equation}\label{eq:equiv formula3}
  \begin{split}
   V_\al & \rar{\hyperref[fig:type3]{\operatorname{(III)}}} V_\al \o A_\al \\
\v_0 & \mapsto \v_0 \o X - \al_1 \v_0 \o 1 \\
\v_1 & \mapsto \v_1 \o X - \al_0 \v_1 \o 1
  \end{split}
\end{equation}
\end{minipage} 
\begin{minipage}{.5\linewidth}
\begin{equation}\label{eq:equiv formula4}
  \begin{split}
   A_\al & \rar{\hyperref[fig:type4]{\operatorname{(IV)}}} V_\al\o V_\al' \\
1 &\mapsto \v_0 \o \v_1' + \v_1 \o \v_0'\\
X & \mapsto  \al_0 \v_0 \o \v_1' + \al_1 \v_1 \o \v_0'
  \end{split}
\end{equation}
\end{minipage}
\vskip1em
\noindent
 
Let $\Cs \subset \A$ consist of $m>0$ essential circles, and let $C$ be the $i$-th essential circle in $\Cs$. Consider the cobordism $S$ whose underlying surface is the identity cobordism $\Cs \times I$, with a single dot on the component $C\times I$, as shown in Figure \ref{fig:single dot on identity}. Then $\Ga(S)$ is the identity on all tensor factors except the one corresponding to $C$, and on $C$ it is given by the left-hand side of \eqref{eq:X on odd and even} if $i$ is odd, and the right-hand side if $i$ is even. 
 
 \begin{figure}
\centering
\includegraphics{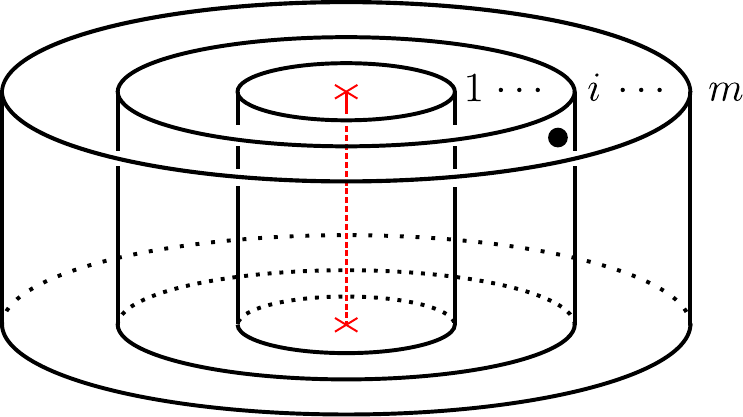}
\caption{Product cobordism on $m>0$ essential circles, with the $i$-th component dotted}\label{fig:single dot on identity}
\end{figure}

\begin{align}\label{eq:X on odd and even} 
\begin{aligned}
   V_\al & \to V_\al \\
\v_0 & \mapsto \al_0 \v_0 \\
\v_1 & \mapsto \al_1 \v_1
\end{aligned}
&&
\begin{aligned}
  V_\al' & \to V_\al' \\
\v_0' & \mapsto \al_1 \v_0' \\
\v_1' & \mapsto \al_0 \v_1'
\end{aligned}
\end{align}
Observe that the functor $\Ga$ is not monoidal, since the action of $X$ on an essential circle depends on its nestedness. 

Let $L\subset \A \times I$ be an oriented link with diagram $D$. Let 
\[
CKh^\A_\al(D) := \Ga([[D]])
\]
denote the chain complex obtained by applying $\Ga$ to the formal complex $[[D]]$. The differential preserves bidegree, and the complex is an invariant of $L$ up to bidegree-preserving chain homotopy equivalence. 

The remainder of this section discusses variants of $\Ga$. Instead of setting both $\al_0 = \al_1=0$, it is possible to set only $\al_0 = 0$ and rename the remaining parameter $\al_1$ to $\al_1 = h$. Denote the resulting Frobenius pair by $(R_h, A_h)$. Explicitly,
\[
R_h = \Z[h],\ A_h = R_h [X]/(X^2-hX).
\]
It may also be obtained from $(R,A)$ by setting $E_1 =h$, $E_2=0$; note that the obstruction in Section \ref{sec:preliminary observation} disappears when $E_2=0$. Collapsing $(R_h, A_h)$ further to characteristic $2$ (that is, applying $(-) \o_{R_h} \Z_2 [h])$ recovers Bar-Natan's theory \cite[Section 9.3]{BN2}. We expect that the resulting annular homology is related to \cite{TW}.

Let $L\subset \A \times I$ be an oriented link with diagram $D$. Viewing $D$ as a diagram in $\R^2$ and applying $\Fa$ to $[[D]]$ yields a chain complex $CKh_\al(D)$ of bigraded $R_\al$-modules. Letting $\d$ denote the differential, Lemma \ref{lem:el cobs split} implies that $\d$ splits as 
\[
\d = \d_0 + \d_2
\]
where $\d_0$ is of bidegree $(0,0)$ and $\d_2$ is of bidegree $(0,2)$. As in \cite{HKLM}, we can introduce an extra parameter $\be$ to account for $\d_2$. Let $R_{\al \be} = R_\al[\be]$ with $\be$ in bidegree $(0,-2)$, and let $CKh_{\al \be}(D)$ be the chain complex over $R_{\al \be}$ with
\[
CKh_{\al \be}^i (D) := CKh_{\al}^i(D) \o_{R_\al} R_{\al \be}
\]
in homological degree $i$ and differential $\d_\be$ given by 
\[
\d_\be := \d_0 + \be \d_2.
\]
Note that $\d_\be$ preserves bidegree. Maps assigned to the four elementary saddles in Figure \ref{fig:el saddles} are given below.

\noindent\begin{minipage}{.5\textwidth}
\begin{equation*}
  \begin{split}
   V_\al \o A_\al & \rar{\hyperref[fig:type1]{\operatorname{(I)}}} V_\al \\  \v_0 \o 1 & \mapsto \v_0 \\ \v_1\o 1 &\mapsto \v_1 \\
    \v_0 \o X & \mapsto \al_0 \v_0 +\ \be v_1 \\
    \v_1 \o X & \mapsto \al_1 \v_1
  \end{split}
\end{equation*}
\end{minipage}%
\begin{minipage}{.38\textwidth}
\begin{equation*}
  \begin{split}
   V_\al \o V_\al' & \rar{\hyperref[fig:type2]{\operatorname{(II)}}} A_\al \\  \v_0 \o \v_0' &\mapsto \be\\
\v_1 \o \v_0' & \mapsto X - \al_0 \\
\v_0 \o \v_1' & \mapsto X - \al_1  \\
\v_1 \o \v_1' & \mapsto 0
  \end{split}
\end{equation*}
\end{minipage}

\vskip1em

\noindent\begin{minipage}{.49\linewidth}
\begin{equation*}
  \begin{split}
   V_\al & \rar{\hyperref[fig:type3]{\operatorname{(III)}}} V_\al \o A_\al \\
\v_0 & \mapsto \v_0 \o X - \al_1 \v_0 \o 1 + \be \v_1 \o 1\\
\v_1 & \mapsto \v_1 \o X - \al_0 \v_1 \o 1
  \end{split}
\end{equation*}
\end{minipage} 
\begin{minipage}{.5\linewidth}
\begin{equation*}
  \begin{split}
   A_\al & \rar{\hyperref[fig:type4]{\operatorname{(IV)}}} V_\al\o V_\al' \\
1 &\mapsto \v_0 \o \v_1' + \v_1 \o \v_0'\\
X & \mapsto  \al_0 \v_0 \o \v_1' + \al_1 \v_1 \o \v_0' + \be \v_1 \o \v_1'
  \end{split}
\end{equation*}
\end{minipage}
\vskip1em
\noindent

\subsection{Inverting $\mathcal{D}$ in equivariant annular homology}\label{sec:inverting D equiv annular}

Recall the Frobenius pair $(R_\ald, A_\ald)$ from \cite{KR}, which was reviewed in Section \ref{sec:inverting D}. Let $\Gad$ denote the composition 
\[
\BN_\al(\A) \rar{\Ga} R_\al \ggmod \to R_\ald \ggmod
\]
where the second functor is extension of scalars. Consider the following elements of $A_\ald$, 
\begin{align*}
\b{v}_0 := \v_0 = 1, \hskip2em \b{v}_1 := \frac{v_1}{\al_1 - \al_0} = \frac{X - \al_0}{\al_1 - \al_0}, \\
\b{v}_0':= \v_0' = 1, \hskip2em \b{v}_1' := \frac{v_1'}{\al_0 - \al_1} = \frac{X - \al_1}{\al_0 - \al_1}.
\end{align*}

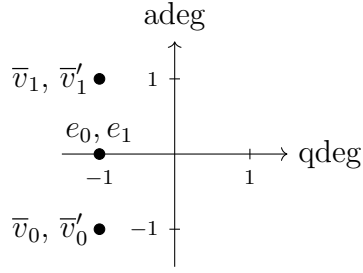
\begin{figure}
\centering
\begin{tikzpicture}
\draw[->] (-1.5,0) -- (1.5,0) node[right] {$\qdeg$};
\foreach \x in { -1, 1}
\draw[shift={(\x,0)}] (0pt,2pt) -- (0pt,-2pt) node[below] {\tiny $\x$};
\draw[->] (0,-1.5) -- (0,1.5) node[above] {$\adeg$};
\foreach \y in {-1,1}
\draw[shift={(0,\y)}] (2pt,0pt) -- (-2pt,0pt) node[left] {\tiny $\y$};
\filldraw[black] (-1,1) circle (2pt) node[anchor=east] {$\b{v}_1$, $\b{v}_1'$};
\filldraw[black] (-1,-1) circle (2pt) node[anchor=east] {$\b{v}_0$, $\b{v}_0'$};
\filldraw[black] (-1,0) circle (2pt) node[anchor=south] {$e_0, e_1$};
\end{tikzpicture}
\caption{Bigradings}\label{fig:bigradings alpha D}
\end{figure}

As in Section \ref{sec:defining the functor}, let $V_\ald$ and $V_\ald'$ denote the module $A_\ald$ with distinguished homogeneous bases $\{\b{v}_0, \b{v}_1\}$ and $\{\b{v}_0', \b{v}_1'\}$, respectively. For a collection of circles $\Cs \subset \A$, the $i$-th essential circle in $\Cs$ is assigned $V_\ald$ if $i$ is odd and $V_\ald'$ if $i$ is even. The notation $A_\ald$ is reserved for trivial circles, with distinguished basis $\{e_0, e_1\}$, see \eqref{eq:e_0 and e_1}. Bigradings are summarized in Figure \ref{fig:bigradings alpha D}. 

With respect to these bases, the maps assigned to the four elementary saddles in Figure \ref{fig:el saddles} are recorded below.

\noindent
\begin{minipage}{.5\textwidth}
\begin{equation}\label{eq:equiv D formula1}
  \begin{split}
   V_\ald \o A_\ald & \rar{\hyperref[fig:type1]{\operatorname{(I)}}} V_\ald \\  
   \b{v}_0 \o e_0 & \mapsto 0 \\ 
   \b{v}_1\o e_0 &\mapsto \b{v}_1 \\
   \b{v}_0 \o e_1 & \mapsto  \b{v}_0 \\
   \b{v}_1 \o e_1 & \mapsto 0
  \end{split}
\end{equation}
\end{minipage}%
\begin{minipage}{.38\textwidth}
\begin{equation}\label{eq:equiv D formula2}
  \begin{split}
   V_\ald \o V_\ald' & \rar{\hyperref[fig:type2]{\operatorname{(II)}}} A_\ald \\ 
    \b{v}_0 \o \b{v}_0' &\mapsto 0 \\
    \b{v}_1 \o \b{v}_0' & \mapsto e_0 \\
    \b{v}_0 \o \b{v}_1' & \mapsto e_1 \\
    \b{v}_1 \o \b{v}_1' & \mapsto 0
  \end{split}
\end{equation}
\end{minipage}

\vskip1em
 
\noindent\begin{minipage}{.49\linewidth}
\begin{equation}\label{eq:equiv D formula3}
  \begin{split}
   V_\ald & \rar{\hyperref[fig:type3]{\operatorname{(III)}}} V_\ald \o A_\ald \\
\b{v}_0 & \mapsto (\al_0 - \al_1) \b{v}_0 \o e_1 \\
\b{v}_1 & \mapsto (\al_1 - \al_0) \b{v}_1 \o e_0 
  \end{split}
\end{equation}
\end{minipage} 
\begin{minipage}{.5\linewidth}
\begin{equation}\label{eq:equiv D formula4}
  \begin{split}
   A_\ald & \rar{\hyperref[fig:type4]{\operatorname{(IV)}}} V_\ald\o V_\ald' \\
e_0 &\mapsto (\al_1 - \al_0) \b{v}_1 \o \b{v}_0'\\
e_1 & \mapsto  (\al_0 - \al_1) \b{v}_0 \o \b{v}_1'
  \end{split}
\end{equation}
\vskip1em
\end{minipage}
To obtain the full set of maps -- that is, if other essential circles are present -- one interchanges $\al_0 \lrar \al_1$, which has the effect of interchanging $\b{v}_0 \lrar \b{v}_0'$, $\b{v}_1 \lrar \b{v}_1'$, and $e_0 \lrar e_1$. They are recorded below for convenience. 

\vskip1em
\noindent
\begin{minipage}{.5\textwidth}
\begin{equation}\label{eq:equiv D formula1'}
  \begin{split}
   V_\ald' \o A_\ald & \to V_\ald' \\  
   \b{v}_0' \o e_0 & \mapsto \b{v}_0' \\ 
   \b{v}_1'\o e_0 &\mapsto 0 \\
   \b{v}_0' \o e_1 & \mapsto  0 \\
   \b{v}_1' \o e_1 & \mapsto \b{v}_1'
  \end{split}
\end{equation}
\end{minipage}%
\begin{minipage}{.38\textwidth}
\begin{equation}\label{eq:equiv D formula2'}
  \begin{split}
   V_\ald' \o V_\ald & \to A_\ald \\ 
    \b{v}_0' \o \b{v}_0 &\mapsto 0 \\
    \b{v}_1' \o \b{v}_0 & \mapsto  e_1 \\
    \b{v}_0' \o \b{v}_1 & \mapsto  e_0 \\
    \b{v}_1' \o \b{v}_1 & \mapsto 0
  \end{split}
\end{equation}
\end{minipage}

\vskip1em
 
\noindent\begin{minipage}{.49\linewidth}
\begin{equation}\label{eq:equiv formula3'}
  \begin{split}
   V_\ald' & \to V_\ald' \o A_\ald \\
\b{v}_0' & \mapsto (\al_1 - \al_0) \b{v}_0' \o e_0 \\
\b{v}_1' & \mapsto (\al_0 - \al_1) \b{v}_1' \o e_1 
  \end{split}
\end{equation}
\end{minipage} 
\begin{minipage}{.5\linewidth}
\begin{equation}\label{eq:equiv formula4'}
  \begin{split}
   A_\ald & \to V_\ald'\o V_\ald \\
e_0 & \mapsto (\al_1 - \al_0) \b{v}_0' \o \b{v}_1\\
e_1 & \mapsto (\al_0 - \al_1) \b{v}_1' \o \b{v}_0
  \end{split}
\end{equation}
\end{minipage}
\vskip1em 

These maps may be written uniformly in the following way. Let $\Cs \subset \A$ be a collection of circles, and label each circle in $\Cs$ by one of the letters $\ab$ or $\bb$. From such a labeling we obtain a distinguished basis element of $\Gad(\Cs)$ by using the correspondence
\begin{equation}\label{eq:a b on trivial}
\ab \lrar e_0,\ \bb \lrar e_1
\end{equation}
for a trivial circle, and 
\begin{equation}\label{eq:a b on essential}
\ab \lrar 
\begin{cases}
\b{v}_1 & i \text{ is odd} \\
\b{v}_0' & i \text{ is even}
\end{cases},\ 
\bb \lrar 
\begin{cases}
\b{v}_0 & i \text{ is odd} \\
\b{v}_1' & i \text{ is even}
\end{cases}
\end{equation}
on the $i$-th essential circle. Then the saddle maps are 

\noindent
\begin{minipage}{.5\textwidth}
\begin{equation}\label{eq:equiv D formula1 ab}
  \begin{split}
   V_\ald \o A_\ald & \rar{\hyperref[fig:type1]{\operatorname{(I)}}} V_\ald \\  
   \bb \o \ab & \mapsto 0 \\ 
   \ab \o \ab &\mapsto \ab \\
   \bb \o \bb & \mapsto  \bb \\
   \ab \o \bb & \mapsto 0
  \end{split}
\end{equation}
\end{minipage}%
\begin{minipage}{.38\textwidth}
\begin{equation}\label{eq:equiv D formula2 ab}
  \begin{split}
   V_\ald \o V_\ald' & \rar{\hyperref[fig:type2]{\operatorname{(II)}}} A_\ald \\ 
    \bb \o \ab &\mapsto 0 \\
    \ab \o \ab & \mapsto  \ab \\
    \bb \o \bb & \mapsto \bb \\
    \ab \o \bb & \mapsto 0
  \end{split}
\end{equation}
\end{minipage}

\vskip1em
 
\noindent\begin{minipage}{.49\linewidth}
\begin{equation}\label{eq:equiv D formula3 ab}
  \begin{split}
   V_\ald & \rar{\hyperref[fig:type3]{\operatorname{(III)}}} V_\ald \o A_\ald \\
\bb & \mapsto (\al_0 - \al_1) \bb \o \bb \\
\ab & \mapsto (\al_1 - \al_0) \ab \o \ab
  \end{split}
\end{equation}
\end{minipage} 
\begin{minipage}{.5\linewidth}
\begin{equation}\label{eq:equiv D formula4 ab}
  \begin{split}
   A_\ald & \rar{\hyperref[fig:type4]{\operatorname{(IV)}}} V_\ald\o V_\ald' \\
\ab &\mapsto (\al_1 - \al_0) \ab \o \ab\\
\bb & \mapsto  (\al_0 - \al_1) \bb \o \bb
  \end{split}
\end{equation}
\vskip1em
\end{minipage}
Moreover, the same formulas hold with $V_\ald$ and $V_\ald'$ interchanged. 

For an annular link $L$ with diagram $D$, let 
\[
CKh^\A_{\ald}(D) := \Gad([[D]])
\]
denote the chain complex obtained by applying $\Gad$ to $[[D]]$. It is an invariant of $L$ up to chain homotopy equivalence, so we may write $Kh^\A_{\ald}(L)$ to denote the homology of $CKh^\A_{\ald}(D)$, for any diagram $D$ of $L$.

\begin{theorem}
Let $L \subset \A \times I$ be a link with diagram $D$. Viewing $L$ as a link in $\R^3$, there is a $\qdeg$-preserving isomorphism 
\[
\varphi\: CKh^\A_{\ald}(D) \rar{\sim} CKh_{\ald}(D). 
\]
\end{theorem}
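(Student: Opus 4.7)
The plan is to build $\varphi$ as a tensor product of local basis-change isomorphisms at each cube vertex. At each vertex $u$ of the cube of resolutions of $D$, both complexes assign the same underlying $R_\ald$-module $A_\ald^{\otimes n_u}$, where $n_u$ is the number of circles in $D_u$; they differ only in their distinguished bases. The annular complex uses the basis $\{\ab, \bb\}$ on each circle as in \eqref{eq:a b on trivial}--\eqref{eq:a b on essential}, which equals $\{e_0, e_1\}$ on trivial circles but equals $\{e_0, 1\}$ or $\{1, e_1\}$ on essential circles of odd or even position, while the non-annular complex uses $\{e_0, e_1\}$ on every circle. I would define $\varphi_u$ to be the tensor product of the local $R_\ald$-linear maps sending $\ab \mapsto e_0$ and $\bb \mapsto e_1$ on each circle; this is the identity on trivial-circle factors and a nontrivial change of basis on essential-circle factors. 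Since $\ab, \bb, e_0, e_1$ all lie in $\qdeg=-1$, each $\varphi_u$ is $\qdeg$-preserving.

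The crux is to verify that $\varphi := \bigoplus_u \varphi_u$ intertwines the two differentials. Both are signed sums of saddle maps, so it suffices to check each saddle type. If a saddle involves only trivial circles, the two theories agree verbatim (both use multiplication or comultiplication of $A_\ald$ in the $\{e_0,e_1\}$ basis). For saddles involving essential circles, the four types from Figure \ref{fig:el saddles} are given in the annular setting by the formulas \eqref{eq:equiv D formula1 ab}--\eqref{eq:equiv D formula4 ab}. Comparing with multiplication and comultiplication of $A_\ald$ in the $\{e_0, e_1\}$ basis, namely $e_i \cdot e_j = \delta_{ij} e_i$ together with $\Delta(e_0) = (\al_1 - \al_0)\, e_0 \o e_0$ and $\Delta(e_1) = (\al_0 - \al_1)\, e_1 \o e_1$, one sees the formulas are formally identical under $\ab \leftrightarrow e_0$, $\bb \leftrightarrow e_1$. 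Hence $\varphi$ intertwines each edge map of the cube.

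The main obstacle, and the point requiring care, is consistency of the local identification across the cube. Concretely, as saddles act one must track that the parity of each essential circle within its vertex is followed correctly. For saddle types (I) and (III), the essential circle involved retains its position in the nesting, so parity is preserved. For types (II) and (IV), the two essential circles involved are consecutive in the ordering, say at positions $i, i+1$; essential circles at positions greater than $i+1$ shift by exactly two after the saddle, again preserving parity, and the twin formulas \eqref{eq:equiv D formula1'}--\eqref{eq:equiv formula4'} encode the identification for circles of the opposite parity so that the assignment $\ab \leftrightarrow e_0,\ \bb \leftrightarrow e_1$ applies uniformly. With this bookkeeping, $\varphi$ is a bijection on each summand, preserves homological degree and $\qdeg$, and satisfies the chain condition, and is therefore the desired $\qdeg$-preserving chain isomorphism.
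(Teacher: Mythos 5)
Your proposal is correct and follows essentially the same route as the paper's proof: define $\varphi_u$ on each smoothing by $\ab\mapsto e_0$, $\bb\mapsto e_1$, observe that the formulas \eqref{eq:equiv D formula1 ab}--\eqref{eq:equiv D formula4 ab} then become exactly multiplication and comultiplication of $A_\ald$ in the idempotent basis, and note that all four basis elements sit in $\qdeg=-1$. Your extra bookkeeping on how essential-circle parities behave under each saddle type is a point the paper leaves implicit, and it is handled correctly.
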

\begin{proof}
For a smoothing $D_u$, the inclusion $\A \hookrightarrow \R^2$ induces an isomorphism
\[
\varphi_u\: \Gad(D_u) \to \Fad(D_u),
\]
defined in terms of the basis elements labeled by $\ab$ and $\bb$ by 
\[
\ab \mapsto e_0,\ \bb\mapsto e_1.
\]
Comparing the formulas \eqref{eq:equiv D formula1 ab}--\eqref{eq:equiv D formula4 ab} with multiplication and comultiplication in $A_\ald$, we see that each of the maps $\varphi_u$ commute with cobordism maps and thus assemble into an isomorphism $\varphi\: CKh^\A_{\ald}(D) \to CKh_{\ald}(D)$. It is evident from Figure \ref{fig:bigradings alpha D} that each $\varphi_u$ preserves $\qdeg$. Quantum grading shifts in both chain complexes are the same, so the isomorphism $\varphi$ preserves $\qdeg$ as well.  
\end{proof}

The following is immediate from Proposition \ref{prop:Lee rank}. 
\begin{corollary}
For a link $L\subset \A\times I$ with $k$ components, the homology $Kh^\A_{\ald}(L)$ is a free $R_\ald$-module of rank $2^k$. 
\end{corollary}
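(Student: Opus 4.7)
The plan is to deduce this corollary directly from the preceding theorem together with Proposition \ref{prop:Lee rank}. The preceding theorem supplies an isomorphism of chain complexes
\[
\varphi\: CKh^\A_{\ald}(D) \rar{\sim} CKh_{\ald}(D)
\]
for any diagram $D$ of $L$. Although the statement of the theorem only emphasizes preservation of the quantum grading, the construction of $\varphi_u$ as an $R_\ald$-linear map (sending $\ab \mapsto e_0$, $\bb \mapsto e_1$ in each smoothing) makes clear that $\varphi$ is an isomorphism of chain complexes of $R_\ald$-modules, and hence induces an $R_\ald$-module isomorphism on homology.

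First I would note that the inclusion $\A \times I \hookrightarrow \R^3$ is an embedding that preserves the number of link components: a $k$-component link $L \subset \A \times I$ remains a $k$-component link when viewed in $\R^3$. Then by the theorem, passing to homology gives
\[
Kh^\A_{\ald}(L) \cong Kh_{\ald}(L)
\]
as $R_\ald$-modules, where on the right-hand side we view $L$ as a link in $\R^3$.

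Finally, Proposition \ref{prop:Lee rank} asserts that $Kh_{\ald}(L)$ is a free $R_\ald$-module of rank $2^k$. Combining this with the isomorphism above yields the desired conclusion. There is no real obstacle here; the work was all done in establishing the chain-level isomorphism $\varphi$ and in Proposition \ref{prop:Lee rank}. The only mild point worth checking is that the isomorphism $\varphi$ is $R_\ald$-linear (not merely additive or $\qdeg$-preserving), but this is immediate from the $R_\ald$-linearity of the basis change $\ab \mapsto e_0$, $\bb \mapsto e_1$ on each smoothing $D_u$ and the $R_\ald$-linearity of the differentials.
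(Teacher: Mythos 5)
Your argument is correct and is exactly the paper's: the paper deduces the corollary immediately from the chain-level isomorphism $\varphi$ of the preceding theorem together with Proposition \ref{prop:Lee rank}. Your additional remarks on $R_\ald$-linearity of $\varphi$ and preservation of the component count are fine and only make explicit what the paper leaves implicit.
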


We recall the \emph{canonical generators} for Lee homology, following \cite{Le} and \cite{Ra}. Let $L\subset \A \times I$ be a link with diagram $D$. Given an orientation $o$ on $L$, let $D_o\subset \A$ denote the result of performing the oriented resolution at each crossing,
\begin{equation*}\label{fig:oriented resolution}
\begin{gathered}
\includegraphics{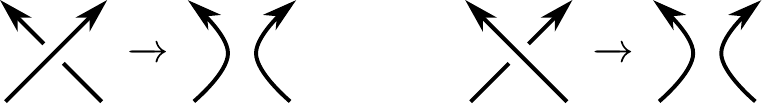}
\end{gathered}\, .
\end{equation*}
Each of the resulting circles is naturally oriented. Assign a mod $2$ invariant to each circle $C$ as follows. First, consider the number of circles in $D_o$ separating $C$ from infinity, mod $2$. Add $1$ if $C$ is standardly (counterclockwise) oriented, and add $0$ otherwise. Now that each circle in $D_o$ is labeled by $0$ or $1$, use the correspondence $0 \lrar \ab$, $1\lrar \bb$ to label each circle by $\ab$ or $\bb$, and finally use \eqref{eq:a b on trivial} and \eqref{eq:a b on essential} to obtain a generator $\s_o$ in $CKh^\A_{\ald}(D)$. 

For a collection of oriented circles $\Cs\subset \A$, let $w(\Cs)$ denote the winding number of $\Cs$. That is, $w(\Cs)$ equals the number of counterclockwise essential circles minus the number of clockwise ones. If $C_1,\ldots , C_m$ are the essential circles in $\Cs$, then 
\[
w(\Cs) = \sum_{i=1}^m w(C_i). 
\]

\begin{proposition}\label{prop:adeg of canonical generators}
Let $L\subset \A \times I$ be a link with diagram $D$, and let $o$ be an orientation of $L$. Let $m$ be the number of essential circles in the oriented resolution $D_o$. Then 
\[
\adeg(\s_o) = (-1)^m w(L,o)
\]
where $w(L,o)$ is the winding number of $L$ with respect to the orientation $o$.  
\end{proposition}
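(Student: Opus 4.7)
The plan is to compute $\adeg(\s_o)$ directly by summing the annular degrees of the basis vectors assigned to each circle of $D_o$. By Figure \ref{fig:bigradings alpha D}, both $e_0$ and $e_1$ lie in annular degree $0$, so every trivial circle in $D_o$ contributes nothing, and it suffices to evaluate the contribution $c_i$ of the $i$-th essential circle $C_i$, numbered from innermost to outermost. I would show that $c_i = (-1)^m s_i$, where $s_i = +1$ if $C_i$ is oriented counterclockwise and $s_i = -1$ if clockwise; summing then gives $\sum_i c_i = (-1)^m \sum_i s_i = (-1)^m w(L,o)$.

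The first key step is the combinatorial identity $n_i = m - i$, where $n_i$ denotes the number of circles in $D_o$ separating $C_i$ from infinity. A trivial circle in $\A$ bounds a disk avoiding the puncture $\times$, whereas an essential circle winds around it; hence no trivial circle can enclose an essential one, so only the essential circles $C_{i+1}, \dots, C_m$ separate $C_i$ from infinity. Consequently the $\{0,1\}$-label assigned to $C_i$ by the recipe preceding the proposition is
\[
(m-i) + o(C_i) \pmod{2},
\]
where $o(C_i) \in \{0,1\}$ is $1$ for counterclockwise and $0$ for clockwise.

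The second step converts this label into $\adeg$ via \eqref{eq:a b on essential} and Figure \ref{fig:bigradings alpha D}. For $i$ odd, $\ab \mapsto \b{v}_1$ sits in degree $+1$ and $\bb \mapsto \b{v}_0$ in degree $-1$; for $i$ even the assignment flips, since $\ab \mapsto \b{v}_0'$ lies in degree $-1$ and $\bb \mapsto \b{v}_1'$ in degree $+1$. A case analysis on the parities of $m$ and $i$, using $n_i = m-i$ to determine which label appears for each orientation of $C_i$, shows uniformly that $c_i = (-1)^m s_i$ in all four cases. Summing over the essential circles finishes the proof.

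The main (minor) obstacle is organizing this case analysis cleanly: the parity of $i$ controls which of $\{\ab, \bb\}$ occupies positive annular degree, while the parity of $m$ controls the relationship between label and orientation through $n_i = m-i$. The happy coincidence is that these two parity dependences combine into the single sign $(-1)^m$, independent of $i$, which then factors out of the sum to give exactly $(-1)^m w(L,o)$.
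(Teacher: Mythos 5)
Your proposal is correct and follows essentially the same route as the paper, which simply asserts that each essential circle contributes $(-1)^m w(C)$ to $\adeg(\s_o)$ and that trivial circles contribute nothing; you supply the omitted verification via $n_i = m-i$ and the parity bookkeeping, and the cancellation of the two parities into the single sign $(-1)^m$ works exactly as you describe. The only step left implicit is the identification $\sum_i s_i = w(D_o) = w(L,o)$, which the paper also states without proof.
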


\begin{proof}
First note that $w(L,o) = w(D_o)$. It is straightforward to verify that each essential circle $C$ in $D_o$ contributes $(-1)^m w(C)$ to the annular degree of $\s_o$. The claim follows, since trivial circles do not contribute to the annular degree or the winding number.
\end{proof}

\section{Dotted Temperley-Lieb category}\label{sec:dotted TL}

This section reviews the Temperley-Lieb category $\TL$ and its relation to annular Khovanov homology, following observations in \cite{BPW}. We then introduce a natural equivariant analogue. 

For each $n\geq 0$, fix a collection $\u{n} \subset (0,1)$ of $n$ points in the interior of the unit interval. A \emph{planar} $(n,m)$\emph{-tangle} is a smooth embedding of a compact $1$-manifold $M$ into $I\times I$, such that the boundary of $M$ maps to $I \times \{0\} \cup I \times \{1\}$, with $n$ points in $\d M$ mapping to $\u{n} \subset I\times \{0\}$, and the remaining $m$ points in $\d M$ mapping to $\u{m} \subset \times \{1\}$. Planar tangles are always considered up to isotopy of $I\times I$ fixing the boundary. 

Let $\TL$ denote the $\Zq$-linear category whose objects are nonnegative integers. The morphism space $\TL(n,m)$ is freely generated over $\Zq$ by planar $(n,m)$-tangles, modulo the local relation that an innermost circle can be removed at the cost of multiplying the remaining tangle by $q+q^{-1}$, 
\begin{equation}\label{eq:TL rel}
\begin{gathered}
\includegraphics{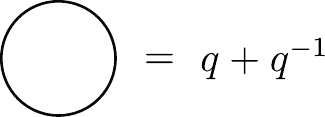}
\end{gathered}\, .
\end{equation}
Composition is defined by vertically stacking planar tangles. Denote the space of morphisms from $n$ to $m$ by
\[
\TL(n,m).
\]
The Temperley-Lieb algebra $TL_n$ can then be identified with the endomorphism space $\TL(n,n)$. Let $\TL_{q=1}$ denote the category obtained by setting $q=1$.

It was observed in \cite[Section 6.1]{BPW} that $\TL$ is closely tied to annular Khovanov homology. By spinning planar tangles in the $S^1$ direction, one obtains a functor
\begin{equation}\label{eq:spinning 1}
S^1 \times (-) \: \TL_{q=1} \to \BN(\A).
\end{equation}
Explicitly, $n$ is sent to the essential circles $S^1\times \u{n} \subset \A$, and a planar tangle $T$ is sent to the cobordism $S^1\times T$. That $S^1\times (-)$ factors through the relation \eqref{eq:TL rel} when $q=1$ follows from the fact that a torus in $\A\times I$ evaluates to $2$ in $\BN(\A)$. Let 
\[
\BBN(\A)
\]
denote the category obtained from $\BN(\A)$ by imposing Boerner's relation, Figure \ref{fig:Boerner}. Recall that the non-equivariant annular TQFT $\Gn$ factors through this relation, so no information is lost from the point of view of link homology. 

It is shown in \cite[Section 4.2]{GLW} that $\Gn$ can be made to take values in the representation category of $\sl_2$. On the other hand, it is well-known that Temperley-Lieb diagrams (planar tangles modulo relation \eqref{eq:TL rel}) can be interpreted as $U_q(\sl_2)$-linear maps between tensor powers of the fundamental representation of $U_q(\sl_2)$; a convenient reference is \cite[Appendix A.1]{BPW}. Thus for $q=1$, both $\TL_{q=1}$ and $\BBN(\A)$ admit functors $\mathcal{F}_{\TL}$ and $\Gn$ to $U(\sl_2)\mmod$. It was observed in \cite[(6.1)]{BPW} that the following diagram commutes.

\begin{equation}\label{eq:commuting triangle}
\begin{tikzcd}[column sep=tiny]
\TL_{q=1} \arrow[rr, "S^1\times (-)"] \arrow[rd, "\mathcal{F}_{\TL}"'] & & \BBN(\A) \arrow[ld, "\Gn"] \\
 & U(\sl_2)\mmod & 
\end{tikzcd}
\end{equation}
Moreover, if $\TL_{q=1}$ is made additive and graded by introducing formal direct sums \cite[Definition 3.2]{BN2} and formal grading shifts \cite[Section 6]{BN2}, then the horizontal functor in \eqref{eq:commuting triangle} becomes an equivalence of categories, \cite[Proposition 6.1]{BPW}.

Thus $\TL_{q=1}$ characterizes the skein category $\BBN(\A)$ and the functor $\F_{\TL}$ characterizes the annular TQFT $\Gn$. On the other hand, $\BN(\A)$ was similarly described using planar diagrams in \cite{Ru}, which we restate now. A \emph{dotted} planar $(n,m)$-tangle is a planar $(n,m)$ tangle whose components may be decorated by some number of dots, which are allowed to float freely along a component. Let 
\[
\TL_\bullet
\]
denote the category whose objects are nonnegative integers and whose morphisms are $\Z$-linear combinations of dotted planar tangles, modulo the additional local relations in Figure \ref{fig:TLd rels}. 

\begin{figure}
\begin{subfigure}[b]{.3\textwidth}
\begin{center}
\includegraphics{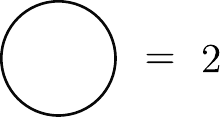}
\end{center}
\caption{Undotted circle}\label{fig:undotted circle TLd}
\end{subfigure}
\begin{subfigure}[b]{.2\textwidth}
\begin{center}
\includegraphics{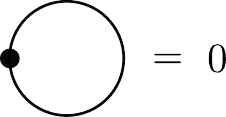}
\end{center}
\caption{Dotted circle}\label{fig:dotted circle TLd}
\end{subfigure}
\begin{subfigure}[b]{.4\textwidth}
\begin{center}
\includegraphics{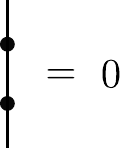}
\end{center}
\caption{Two dots relation}\label{fig:two dots TLd}
\end{subfigure}
\caption{Relations in $\TL_\bullet$.}\label{fig:TLd rels}
\end{figure}

For a dotted planar  tangle $T$, let $T^u$ denote the tangle obtained by removing all dots ($u$ stands for undotted). Consider the functor 
\begin{equation}\label{eq:spinning d}
S^1 \times (-) \: \TL_\bullet \to \BN(\A)
\end{equation} which sends a dotted planar tangle $T$ to the cobordism whose underlying surface is $S^1 \times T^u$, and which carries $k$ dots on a component if $T$ carried $k$ dots on the corresponding component. The relations in Figure \ref{fig:TLd rels} are planar analogues of relations in $\BN(\A)$, Figure \ref{fig:BN relations}. Figures \ref{fig:undotted circle TLd} and \ref{fig:dotted circle TLd} correspond to an undotted torus and a once-dotted torus evaluating to $2$ and $0$ in $\BN(\A)$, respectively. Figure \ref{fig:two dots TLd} corresponds to the two dots relation in Figure \ref{fig:two dot}. 

Upon introducing formal direct sums and formal grading shifts, the argument in \cite[Proposition 6.1]{BPW} shows that the functor \eqref{eq:spinning d} is essentially surjective and full. It is not faithful, but by \cite[Theorem 3.1]{Ru}, its kernel is generated by the local relations shown in Figure \ref{fig:Russel relations nonequiv}. Note that the second follows from the first by adding a dot near one of the endpoints of the strands and simplifying using the two dots relation. 
\begin{figure}
\includegraphics{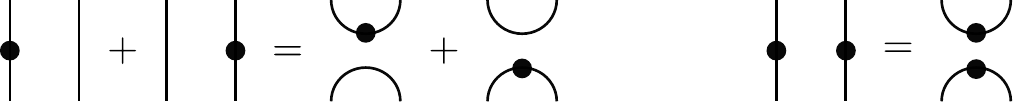}
\caption{Russel relations.}\label{fig:Russel relations nonequiv}
\end{figure}
To see that the relations hold, consider two annuli embedded in $\A \times I$ with a tube joining them, Figure \ref{fig:tubed annuli}, and perform neck-cutting along the two disks shown in Figure \ref{fig:tubed annuli2} and Figure \ref{fig:tubed annuli1}. Denote by 
\[
\til{\TL}_\bullet
\]
the category obtained by imposing the Russel relations. It follows from \cite[Theorem 3.1]{Ru} that the induced functor 
\[
S^1 \times (-) \: \widetilde{\TL}_\bullet \to \BN(\A)
\]
becomes an equivalence of categories after introducing formal direct sums and formal grading shifts to $\widetilde{\TL}_\bullet$.

\begin{figure}
\begin{subfigure}[b]{.3\textwidth}
\begin{center}
\includegraphics{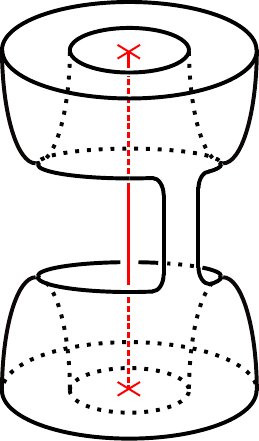}
\end{center}
\caption{}\label{fig:tubed annuli}
\end{subfigure}
\begin{subfigure}[b]{.3\textwidth}
\begin{center}
\includegraphics{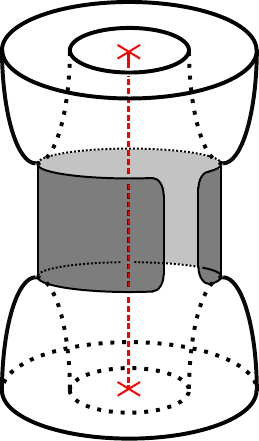}
\end{center}
\caption{}\label{fig:tubed annuli2}
\end{subfigure}
\begin{subfigure}[b]{.3\textwidth}
\begin{center}
\includegraphics{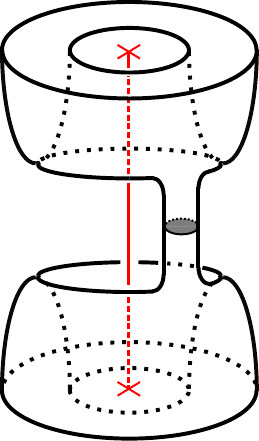}
\end{center}
\caption{}\label{fig:tubed annuli1}
\end{subfigure}
\caption{Two compression disks.}\label{fig:Russel rels explained}
\end{figure}

An equivariant version of $\TL_\bullet$ follows from considering the skein category $\BN_\al(\A)$. Arguing as in \cite[Proposition 6.1]{BPW}, any object of $\BN_\al(\A)$ is isomorphic to a direct sum of grading-shifted essential circles. Any cobordism in $\BN_\al(\A)$ can be expressed, in a non-unique way, as an $R_\al$-linear combination of cobordisms of the form $S^1\times T$, where $T$ is a dotted planar tangle. It follows that any additive functor out of $\BN_\al(\A)$ is determined by its value on each collection of $n\geq 0$ essential circles and on cobordisms of the form $S^1\times T$. This naturally leads to the following definition.

\begin{definition}
Let $\TL_\al$ denote the category whose objects are nonnegative integers, and whose morphisms are formal $R_\al$-linear combinations of dotted planar tangles, modulo the local relations shown in Figure \ref{fig:TLa rels}. 
\end{definition}

For a dotted planar tangle $T$ with $d(T)$ dots, define its degree to be 
\begin{equation*}\label{eq:deg tangle}
\deg(T) = 2 d(T).
\end{equation*}
Note that the relations in Figure \ref{fig:TLa rels} are homogeneous.

\begin{figure}
\begin{subfigure}[b]{.25\textwidth}
\begin{center}
\includegraphics{AD_rel_TL1.pdf}
\end{center}
\caption{Undotted circle}\label{fig:undotted circle}
\end{subfigure}
\begin{subfigure}[b]{.25\textwidth}
\begin{center}
\includegraphics{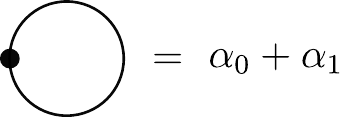}
\end{center}
\caption{Dotted circle}\label{fig:dotted circle}
\end{subfigure}
\begin{subfigure}[b]{.4\textwidth}
\begin{center}
\includegraphics{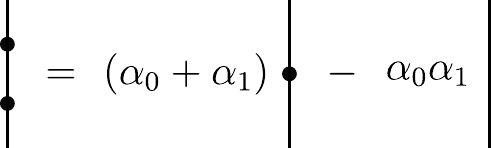}
\end{center}
\caption{Two dots relation}\label{fig:two dots TLa}
\end{subfigure}
\caption{Relations in $\TL_\al$.}\label{fig:TLa rels}
\end{figure}

To motivate the relations, consider the functor 
\begin{equation}\label{eq:spinning alpha}
S^1 \times (-) \: \TL_\al \to \BN_\al(\A)
\end{equation}
defined as in \eqref{eq:spinning d}. An undotted torus and a once-dotted torus in $\A \times I$ evaluate to $2$ and $\al_0 + \al_1$ in $\BN_\al(\A)$, respectively, which explains the relations in Figure \ref{fig:undotted circle} and Figure \ref{fig:dotted circle}. The relation in Figure \ref{fig:two dots TLa} is a planar analogue of the two dots relation in $\BN_\al(\A)$, see Figure \ref{fig:alpha two dot}. A straightforward induction argument also shows that an innermost circle with $k\geq 0$ dots evaluates to $\al_0^k + \al_1^k$ in $\TL_\al$,
\begin{equation}\label{eq:k dotted circle}
\begin{gathered}
\includegraphics{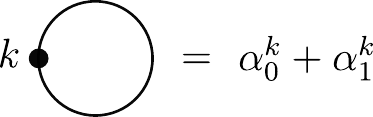}\, .
\end{gathered}
\end{equation}
By composing \eqref{eq:spinning alpha} with the equivariant annular TQFT $\Ga$, 
\begin{equation}\label{eq:TLa as linear maps}
\TL_\al \rar{S^1 \times (-)} \BN_\al(\A) \rar{\Ga} R_\al\ggmod,
\end{equation}
one can view dotted planar tangles as $R_\al$-linear maps between tensor powers of $A_\al$. 

\begin{remark}\label{rmk:U(2) vs U(1) times U(1)}
As is the case for $\BN_\al(\A)$, the relations in $\TL_\al$ involve only symmetric polynomials in $\al_0$ and $\al_1$, so one may consider the $U(2)$-equivariant analogue instead; see also Remark \ref{rmk:cob cat BNa is induced from U(2)}. However, the functor \eqref{eq:TLa as linear maps} is not present in the $U(2)$-equivariant setting. 
\end{remark}

The functor \eqref{eq:spinning alpha} is of course not faithful. For example, it factors through the local relations shown in Figure \ref{fig:Russel relations}, which are equivariant analogues of the Russel relations, Figure \ref{fig:Russel relations nonequiv}. Let
\[
\widetilde{\TL}_{\al}
\]
denote the category obtained from $\TL_\al$ by imposing the relations in Figure \ref{fig:Russel relations} (it suffices to impose only the first). 

\begin{figure}
\centering
\includegraphics{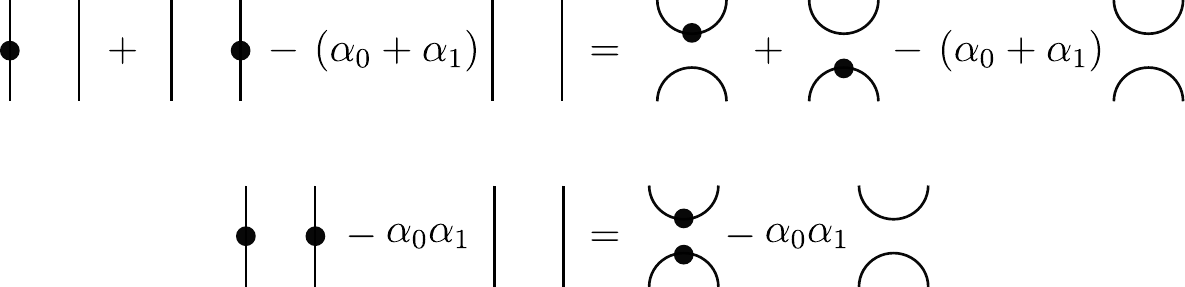}

\caption{Equivariant Russel relations.}\label{fig:Russel relations}
\end{figure}
We end the section with two questions. The first is motivated by \cite[Theorem 3.1]{Ru}. 
\begin{question}{1}
Is the induced spinning functor $S^1\times (-)\: \widetilde{\TL}_\al \to \BN_\al(\A)$ faithful? 
\end{question}

By \cite[Main Theorem]{Ru} and results in \cite{Kh2}, the abelian group $\widetilde{\TL}_\bullet(2n,0)$ is free of rank 
\[
\binom{2n}{n}.
\]Note that for a symbol $\star \in \{\varnothing, \bullet, \alpha\}$, the modules $\TL_\star(n,m)$ and $\TL_\star(k,\l)$ are isomorphic whenever $n+m = k + \l$. An isomorphism $\TL_\star(n,m) \to \TL_\star(n+m,0)$ and its inverse are depicted in Figure \ref{fig:tangle maps}. It clearly factors through the Russel relations, so $\widetilde{\TL}_\bullet(n,m)$ is free of rank 
\begin{equation}\label{eq:rank of TLd}
\binom{n+m}{(n+m)/2}
\end{equation}
whenever $n$ and $m$ have the same parity, and otherwise it is the zero module. 

\begin{question}{2}
Is $\til{\TL}_\al(n,m)$ free over $R_\al$, and, if so, of what rank?
\end{question}

\begin{figure}
\begin{subfigure}[b]{.4\textwidth}
\begin{center}
\includegraphics{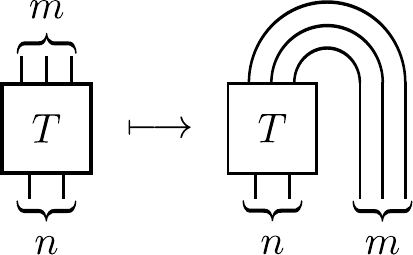}
\end{center}
\caption{$\TL_\star(n,m) \to \TL_\star(n+m,0)$}\label{fig:tangle map1}
\end{subfigure}
\begin{subfigure}[b]{.4\textwidth}
\begin{center}
\includegraphics{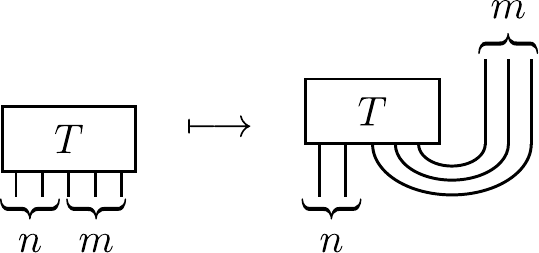}
\end{center}
\caption{$\TL_\star(n+m,0) \to \TL_\star(n,m)$}\label{fig:tangle map2}
\end{subfigure}
\caption{An isomorphism $\TL_\star(n,m) \cong \TL_\star(n+m,0)$.}\label{fig:tangle maps}
\end{figure}

Note that $\TL_\al(n,m) = 0$ if $n$ and $m$ have different parities, and otherwise $\TL_\al(n,m)$ is free of rank 
\[
2^{\l} C(\l),
\]
where $\l = \frac{n+m}{2}$ and $C(\l)$ is the $\l$-th Catalan number. To see this, consider the collection of dotted planar $(n,m)$-tangles in which every component carries at most one dot and which has no closed components. They evidently form a basis for $\TL_\al(n,m)$. There are $C(\l)$ undotted planar $(n,m)$-tangles with no closed components. A fixed such tangle has $\l$ components, hence $2^\l$ ways to put at most one dot on each component, which yields the count.

We can find bases for small values. A basis for $\widetilde{\TL}_{\al}(1,1)$ is given by an undotted and once-dotted vertical strand, and a basis for $\widetilde{\TL}_{\al}(2,2)$ is depicted in \eqref{eq:TLa(2,2) basis}. That these elements generate the module follows from Figure \ref{fig:Russel relations}, and linear independence can be verified using $S^1\times (-)$ and the TQFT $\Ga$. The ranks agree with \eqref{eq:rank of TLd}, but it is not clear if this is a coincidence for small examples. For $\widetilde{\TL}_\alpha(3,3)$, there are $2^3 C(3) = 40$ generators and many relations, making direct computation difficult. 

\begin{equation}\label{eq:TLa(2,2) basis}
\begin{gathered}
\includegraphics{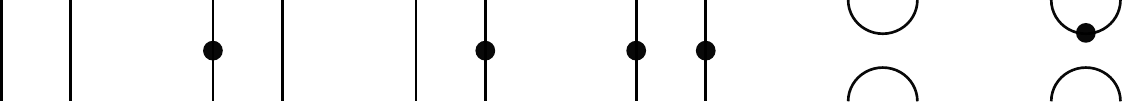}
\end{gathered}
\end{equation}

\end{document}